\theoremstyle{plain}
\newtheorem{lem}{Lemma}[section]
\newtheorem{cor}[lem]{Corollary}
\newtheorem{prop}[lem]{Proposition}
\newtheorem{thm}[lem]{Theorem}
\theoremstyle{definition}
\newtheorem{defn}[lem]{Definition}
\newtheorem{ex}[lem]{Example}
\newtheorem{question}[lem]{Question}
\newtheorem{disc}[lem]{Remark}
\newtheorem{notn}[lem]{Notation}
\newcommand{\im}{\operatorname{Im}}
\newcommand{\Cl}{\operatorname{Cl}}
\newcommand{\ideal}[1]{\mathfrak{#1}}
\newcommand{\p}{\ideal{p}}
\newcommand{\q}{\ideal{q}}
\newcommand{\fa}{\ideal{a}}
\newcommand{\fb}{\ideal{b}}
\newcommand{\fc}{\ideal{c}}
\newcommand{\fs}{\ideal{s}}
\newcommand{\fr}{\ideal{r}}
\newcommand{\bbz}{\mathbb{Z}}
\newcommand{\bbn}{\mathbb{N}}
\newcommand{\bbq}{\mathbb{Q}}
\newcommand{\bbr}{\mathbb{R}}
\newcommand{\bbc}{\mathbb{C}}
\newcommand{\xra}{\xrightarrow}
\newcommand{\into}{\hookrightarrow}
\renewcommand{\geq}{\geqslant}
\renewcommand{\leq}{\leqslant}
\newcommand{\ssm}{\smallsetminus}
\newcommand{\ft}{\ideal{t}}
\theoremstyle{definition}
\newtheorem*{assumptions}{Assumptions}
\numberwithin{equation}{lem}
\begin{document}

\bibliographystyle{amsplain}

%\author{Jim Coykendall}
%\address{Department of Mathematical Sciences\\
%	Clemson University\\
%O-110 Martin Hall, Box 340975, Clemson, S.C. 29634, USA}
%\email[Coykendall]{jcoyken@clemson.edu}
%\urladdr[Coykendall]{http://jcoyken.people.clemson.edu/}

\author{Sean K. Sather-Wagstaff}

\address{School of Mathematical and Statistical Sciences,
Clemson University,
O-110 Martin Hall, Box 340975, Clemson, S.C. 29634
USA}

\email
%[Sather-Wagstaff]
{ssather@clemson.edu}

\urladdr
%[Sather-Wagstaff]
{https://ssather.people.clemson.edu/}

\title{Irreducible divisor pair domains}

%\date{\today}

%\dedicatory{}

\keywords{
atomic domain; BFD; bounded factorization domain; divisor class group; FFD; finite factorization domain; HFD; half factorization domain; IDPD; irreducible divisor pair domain; Krull domain; UFD; unique factorization domain}
\subjclass[2010]{
13A05; % Divisibility; factorizations [See also 13F15]
13A15; % Ideals; multiplicative ideal theory
13F15; % Rings defined by factorization properties (e.g., atomic, factorial, half-factorial) [See also 13A05, 14M05]
13G05% Integral domains
}

\begin{abstract}
We introduce and study a new class of integral domains which we call irreducible divisor pair domains (IDPDs). In particular, we show how IDPDs fit in with other classes of integral domains defined in terms of factorization conditions. For instance, every UFD is an IDPD, and every IDPD is an HFD, but the converses fail in general. We also show that many familiar examples of HFDs are also IDPDs.
\end{abstract}

\maketitle

%\tableofcontents

\section{Introduction} \label{sec170725a}

\begin{assumptions}
Throughout this paper, let $D$ be an integral domain with unit group $U(D)$.
Let $\sim$ denote the associate relation on $D$.
Recall that the integral domain $D$ is \emph{atomic}
if every non-zero non-unit $z$ of $D$ factors as a finite product $z=p_1\ldots p_n$ of irreducible elements
or ``atoms'', hence the terminology.
\end{assumptions}

The goal of this paper is to initiate the study of the  class irreducible divisor pair domains, defined next.

\begin{defn}\label{defn170725a}
The integral domain
$D$ is a \emph{irreducible divisor pair domain (IDPD)}
if it is atomic
such that for every non-zero non-unit $z\in D$ and for every pair $p,q\in D$ of non-associate irreducible divisors
of $z$, there exist atoms $p',q'\in D$ such that $pp'\sim qq'\mid z$.
\end{defn}

This definition is motivated by the work of Coykendall and Maney~\cite{coykendall:idg}
and our own work with Goodell~\cite{goodell:cf} on combinatorial tools for understanding
factorization in integral domains.
See Remark~\ref{disc170727a} for a discussion of the connection with the first of these.

As part of this initial investigation of IDPDs, we are interested in where these domains fit in with other
classes of domains. For instance:

\begin{ex}\label{ex170725a}
If $D$ is a unique factorization domain (UFD), then it is an IDPD.
Indeed, assume that $D$ is a UFD, so $D$ is atomic and every atom of $D$ is prime. 
Consider a non-zero non-unit $z\in D$ and a pair $p,q\in D$ of non-associate irreducible divisors
of $z$. Write $z=pz'$ for some $z'\in D\ssm\{0\}$, so we have $q\mid z=pz'$.
Since $q$ is prime and is not associate to the prime $p$, it follows that $q\mid z'$.
From this we have $pq\mid pz'=z$, and it follows that the defining condition from~\ref{defn170725a} is satisfied with $p'=q$ and $q'=p$.
(One can also deduce this implication as an application of~\cite[Theorem~5.1]{coykendall:idg}.)
\end{ex}

We are also interested in how IDPDs compare with  factorization properties studied
by Anderson, Anderson, and Zafrullah~\cite{AAZ}. 
Here is a summary diagram of our findings in this direction; see Section~\ref{sec170725b} for definitions.
(The un-labeled implications/non-implications in this diagram are well-known.)
\begin{equation}
\label{diag170909a}
\begin{split}\xymatrix@=15mm{
&\text{HFD}
\ar@{=>}[ld]|{|}
\ar@<2ex>@{=>}[rd]
\ar@<1ex>@{=>}[d]|{-}^{\ref{thm170908a}}
\\
\text{UFD}
\ar@<1ex>@{=>}[r]^{\ref{ex170725a}}
\ar@<2ex>@{=>}[ru]
\ar@{=>}[rd]
&\text{IDPD}
\ar@<1ex>@{=>}[l]|{|}^{\text{\ref{ex170725b}}}
\ar@<1ex>@{=>}[u]^{\ref{thm170725a}}
\ar@<1ex>@{=>}[d]|{-}^{\ref{ex170725b}}
\ar@<1ex>@{=>}[r]^{\ref{thm170725a}}
&\text{BFD}
\ar@{=>}[lu]|{|}
\ar@<2ex>@{=>}[ld]|{|}
\ar@<1ex>@{=>}[l]|{|}^{\ref{ex170725d}}
\\
&\text{FFD}
\ar@<2ex>@{=>}[lu]|{|}
\ar@<1ex>@{=>}[u]|{-}^{\ref{ex170725d}}
\ar@{=>}[ru]
}
\end{split}
\end{equation}
In the process of completing this diagram, we show that many familiar examples of HFDs are also IDPDs.
(This actually forms the bulk of the paper.)
For instance, we prove the following.
\begin{enumerate}[(a)]
\item\label{itemthm170725b}
Let $K$ be a field, and let $F\subseteq K$ be a subfield.
Then the ring $D=F+XK[X]$ is an IDPD. 
(Theorem~\ref{thm170725b})
\item\label{itemthm170730b}
The ring $\bbz[\sqrt{-3}]$ is an IDPD. 
Moreover, it is the unique, non-integrally closed imaginary quadratic IDPD.
(Theorem~\ref{thm170730b})
\item\label{itemthm170903b}
Let $p$ be a prime number, and
let $D$ be a Krull domain with divisor class group $\Cl(D)\cong\bbz_{p^\infty}$
or $\Cl(D)\cong\bbz_{p^k}$ for some $k\in\bbn$.
If $D$ is an HFD, then $D$ is an IDPD.
(Theorem~\ref{thm170903b})
\end{enumerate}

As to the organization of the paper, Section~\ref{sec170725b} looks at IDPDs in general.
In particular, this section fills in most of the above summary diagram and discusses the localization behavior of IDPDs.

In Section~\ref{sec170906a}, we focus on Krull domains (in particular, Dedekind domains).
Here we use the divisor class group to get at the relation between IDPDs and 
HFDs (half factorial domains)
For instance, we show that several classes of HFDs
from the seminal paper of Zaks~\cite{zaks} are in fact IDPDs, 
and it is here in Theorem~\ref{thm170906a} that we exhibit a Dedekind domain that is an HFD but not an IDPD.

Lastly, Appendix~\ref{sec170903a} contains three technical lemmas for use in the proofs of Section~\ref{sec170906a}.
We relegate them to the appendix since they deal only with properties of natural numbers.

\section{General IDPDs} \label{sec170725b}

In this section, we investigate IDPDs that are not necessarily Krull domains.
We begin with the following prime-free characterization of IDPDs.

\begin{prop}\label{prop190826a}
An atomic  domain
$D$ is an IDPD
if and only if the following condition holds:
\begin{enumerate}[$(\dagger)$]
\item for every non-zero non-unit $z\in D$ that is a product of non-prime atoms and for every pair $p,q\in D$ of non-associate 
non-prime irreducible divisors
of $z$, there exist  (non-prime) atoms $p',q'\in D$ such that $pp'\sim qq'\mid z$.
\end{enumerate}
\end{prop}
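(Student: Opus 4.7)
The plan is to establish the two directions separately. The forward direction $(\Rightarrow)$ should be essentially a restriction of the IDPD condition, while the reverse direction $(\Leftarrow)$ will require reducing the IDPD condition to the setting of $(\dagger)$ by stripping off prime factors of $z$.

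For $(\Rightarrow)$, I will assume $D$ is an IDPD and verify $(\dagger)$. Since $p,q$ are non-associate irreducible divisors of the non-zero non-unit $z$, the IDPD condition directly produces atoms $p',q'$ with $pp'\sim qq'\mid z$. I will then observe that these atoms must be non-prime: if $p'$ were prime, then $p'\mid z=c_1\cdots c_n$, a product of non-prime atoms, would force $p'\sim c_i$ for some $i$ (as each $c_i$ is an atom), making $c_i$ prime and yielding a contradiction; the same holds for $q'$.

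For $(\Leftarrow)$, I will assume $(\dagger)$ and fix a non-zero non-unit $z\in D$ together with non-associate irreducible divisors $p,q$. I plan to split into cases based on primality. If $p$ is prime, I will write $z=pz'$ and argue that $q\mid z'$: from $pz'=qw$ and primality of $p$ one gets $p\mid q$ or $p\mid w$; the first option would force $p\sim q$ (since $q$ is an atom), contradicting the hypothesis, so $p\mid w$, and cancellation in the domain $D$ gives $q\mid z'$. Hence $pq\mid z$, witnessing the IDPD condition with $p'=q$ and $q'=p$. The case of $q$ prime is symmetric.

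The substantive case, and what I expect to be the main obstacle, is when neither $p$ nor $q$ is prime. Here I will use atomicity to factor $z=u\,a_1\cdots a_m\,b_1\cdots b_n$ with $u\in U(D)$, each $a_i$ a prime atom, and each $b_j$ a non-prime atom, and I will set $z_0=b_1\cdots b_n$. The key step is to show that $p\mid z_0$ (and symmetrically $q\mid z_0$), which I will prove by induction on $m$: in the inductive step, from $a_1 y=ps$ with $a_1$ prime, primality yields $a_1\mid p$ or $a_1\mid s$; the first option would force $p\sim a_1$ since $p$ is an atom, making $p$ prime and contradicting the hypothesis, so $a_1\mid s$, and cancelling $a_1$ in the domain yields $p\mid y$, to which the inductive hypothesis applies. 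Since $p$ is a non-unit divisor of $z_0$, this forces $n\geq 1$, so $z_0$ is a non-zero non-unit that is (by construction) a product of non-prime atoms. Applying $(\dagger)$ to $z_0$ with the pair $p,q$ will then produce atoms $p',q'$ with $pp'\sim qq'\mid z_0\mid z$, completing the proof.
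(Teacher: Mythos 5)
Your proof is correct and follows essentially the same route as the paper's: both directions hinge on the same observations, namely that a prime divisor of a product of non-prime atoms would have to be associate to one of those atoms (forward direction), and that one can peel off the prime factors of $z$ one at a time via Euclid's lemma to reduce to the prime-free case where $(\dagger)$ applies (converse). The only difference is organizational: you induct to prove the divisibility statement $p,q\mid z_0$ and then invoke $(\dagger)$ once, whereas the paper inducts on the number of prime factors with the full conclusion as the inductive hypothesis --- the underlying argument is identical.
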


\begin{proof}
The forward implication is trivial. For the converse, assume that $D$ is an atomic  domain satisfying condition ($\dagger$),
and let a
non-zero non-unit $z\in D$ be given with  $p,q\in D$  non-associate irreducible divisors
of $z$.
Then there are elements $a,b\in D$ such that $pa=z=qb$.
If $p$ or $q$ is prime, then we are done as in Example~\ref{ex170725a}.
So, assume that $p$ and $q$ are non-prime. 

Let $z=p_1\cdots p_n$ be an irreducible factorization of $z$
and reorder the factors if necessary to assume that $p_i$ is prime if and only if $i\leq N$ for some integer $N\geq 0$. 
We induct on $N$.
The base case $N=0$ is covered by the assumption~($\dagger$).
For the inductive step, assume that $N\geq 1$, so in particular $p_1$ is prime.
We have 
\begin{equation}\label{eq190829a}
p_1\cdots p_n=z=pa=qb
\end{equation}
so the fact that $p_1$ is prime and $p,q$ are not implies that $p_1\mid a$ and $p_1\mid b$,
say $a=p_1a_1$ and $b=p_1b_1$.
Substitute these in~\eqref{eq190829a} and cancel $p_1$ to find

Note that the logic of Example~\ref{ex170725a} shows that 
$z_1:=p_2\cdots p_n=pa_1=qb_1$.
Now apply the inductive hypothesis to conclude that there exist atoms $p',q'\in D$ such that $pp'\sim qq'\mid z_1\mid z$, as desired.

Lastly, let $z\in D$ be a 
non-zero non-unit  that is a product of non-prime atoms, and let $p,q\in D$ be non-associate 
non-prime irreducible divisors
of $z$, and let $p',q'\in D$ be atoms such that $pp'\sim qq'\mid z$.
We need to show that $p',q'$ are non-prime. By way of contradiction, if $p'$ were prime, then it would be a prime factor of $z$, 
and the logic of Example~\ref{ex170725a} shows that every irreducible factorization of $z$ has a prime element in the list of factors, contradicting our
assumptions for $z$. So, $p'$ is not prime, and neither is $q'$ by symmetry.
\end{proof}

We continue with the following
part of diagram~\eqref{diag170909a} that also motivates much of Section~\ref{sec170906a}.
For this result, recall that an atomic domain $D$ is a \emph{half-factorial domain (HFD)} 
if it satisfies the following half of the defining property for a UFD:
for all atoms $p_1,\ldots,p_m,q_1,\ldots,q_n\in D$ if $p_1\cdots p_m=q_1\cdots q_n$, then $m=n$.
Also, recall that the integral domain $D$ is a \emph{bounded factorization domain (BFD)} if it is atomic
and for each non-zero non-unit $z\in D$ there is a bound on the lengths of the irreducible factorizations of $z$ in $D$.

\begin{thm}\label{thm170725a}
If $D$ is an IDPD, then it is an HFD, in particular, it is a BFD.
\end{thm}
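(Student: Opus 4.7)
The plan is to prove the HFD property directly; the BFD conclusion is then automatic, since in an HFD the common length of the atomic factorizations of $z$ is itself the required bound. I will establish the HFD property by strong induction on $m$ with the statement: if $z$ has some atomic factorization of length $m$, then every atomic factorization of $z$ has length $m$.

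The base case $m = 1$ is immediate, since a product of two or more atoms is never itself an atom. For the inductive step, fix $m \geq 2$, assume the statement at every smaller length, and consider two atomic factorizations $z = p_1 \cdots p_m = q_1 \cdots q_n$. If some $p_i$ is associate to some $q_j$, cancel the associate pair to obtain a non-zero non-unit with factorizations of lengths $m - 1$ and $n - 1$, and the inductive hypothesis forces $m - 1 = n - 1$, hence $m = n$.

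Otherwise, no $p_i$ is associate to any $q_j$, so $p_1$ and $q_1$ are non-associate irreducible divisors of $z$. The IDPD property produces atoms $p', q'$ with $p_1 p' \sim q_1 q' \mid z$. Write $z = p_1 p' s$; cancelling $p_1$ gives $p_2 \cdots p_m = p' s$, and rewriting $z$ via $p_1 p' \sim q_1 q'$ and cancelling $q_1$ gives $q_2 \cdots q_n \sim q' s$. If $s$ is a unit, then $p_2 \cdots p_m$ and $q_2 \cdots q_n$ are each associate to an atom, so applying the inductive hypothesis at length $1$ to these elements forces $m - 1 = 1 = n - 1$, giving $m = n = 2$. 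Otherwise factor $s = s_1 \cdots s_k$ with $k \geq 1$: then $p_2 \cdots p_m$ has factorizations of lengths $m - 1$ and $k + 1$, and the inductive hypothesis applied to $p_2 \cdots p_m$ (which has a factorization of length $m - 1 < m$) yields $k + 1 = m - 1$.

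Finally, the identity $q_2 \cdots q_n \sim q' s_1 \cdots s_k$ is a factorization of $q_2 \cdots q_n$ of length $k + 1 = m - 1$, while the original gives length $n - 1$; the inductive hypothesis applied to $q_2 \cdots q_n$ now yields $n - 1 = m - 1$, hence $n = m$. The main delicate point will be the bookkeeping around the IDPD output: one must extract from the divisibility $p_1 p' \mid z$ a genuine equality $z = p_1 p' s$ with a well-defined remaining factor $s$ so that the inductive hypothesis can be invoked on the right subsidiary elements, with particular care in the degenerate case when $s$ is a unit, where the relevant length-$1$ factorizations arise only after absorbing units into associate atoms.
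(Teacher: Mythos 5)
Your proof is correct and follows essentially the same route as the paper's: induct on factorization length, dispose of the case where some $p_i$ is associate to some $q_j$ by cancellation, and otherwise apply the IDPD property to $p_1,q_1$ and compare the two factorizations through the common cofactor $s$ of $p_1p'\sim q_1q'$. The only differences are cosmetic: you induct on $m$ rather than on $\min(m,n)$ after a WLOG, and you split off the case where $s$ is a unit, which the paper absorbs by writing the cofactor as a unit times $\ell\geq 0$ atoms.
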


\begin{proof}
Assume that $D$ is an IDPD. 
By definition, this implies that $D$ is atomic.
To show that $D$ is an HFD, let $z\in D$ be a non-zero non-unit with  irreducible factorizations
\begin{equation}
\label{eq170725a}
p_1\cdots p_m=z=q_1\cdots q_n.
\end{equation}
We need to show that $m=n$. 
Assume without loss of generality that $m\geq n$.

We argue by induction on $n$.
The base case $n=1$ is straightforward since the $p_i$ and $q_i$ are atoms.

For the induction step,  assume that $m\geq n\geq 2$ and that given irreducible factorizations 
$\pi_1\cdots\pi_\ell=\tau_1\cdots\tau_{k}$ with $k<n$ or $\ell<n$, then one has $\ell=k$.
If there are integers $i,j$ such that $p_i\sim q_j$, then re-order the factors in~\eqref{eq170725a} to assume that $p_m\sim q_n$;
in this case, there is a unit $u\in U(D)$ such that $p_1\cdots p_{m-1}=(uq_1)q_2\cdots q_{n-1}$,
so the induction hypothesis implies that $n-1=m-1$, so $n=m$ as desired.

Assume for the rest of the proof that $p_i\not\sim q_j$ for all $i,j$. 
In particular, $p_1\not\sim q_1$.
The fact that $D$ is an IDPD then implies that there are atoms $p',q'$ such that
$p_1p'\sim q_1q'\mid z$. 
Thus, we have $p_1p'=vq_1q'$ for some $v\in U(D)$. We  replace $q'$ with the atom $vq'$ to assume that
$p_1p'= q_1q'\mid z$. 

Now, write $p_1p'z'=z$ for some $z'\in D\ssm\{0\}$.
It follows that we have
$p_1p'z'=p_1\cdots p_m$, so 
\begin{equation}
\label{eq170725b}
p'z'=p_2\cdots p_m.
\end{equation}
Similarly, we have 
\begin{equation}
\label{eq170725c}
q'z'=q_2\cdots q_n.
\end{equation}
Since $D$ is atomic, there are atoms $\xi_1,\ldots,\xi_\ell\in D$ and a unit $w\in U(D)$ such that $\ell\geq 0$ and 
$z'=w\xi_1\cdots\xi_\ell$. (The unit is included so that we don't have to treat the case $\ell=0$ separately.)
Equations~\eqref{eq170725b} and~\eqref{eq170725c} then imply that
\begin{align}
p_2\cdots p_m&=wp'\xi_1\cdots\xi_\ell
\label{eq170725d}
\\
q_2\cdots q_n&=wq'\xi_1\cdots\xi_\ell
\label{eq170725e}
\end{align}
The  equality~\eqref{eq170725e} here, with our induction hypothesis, implies that $\ell+1=n-1$.
In particular, we have $\ell+1<n$, so equation~\eqref{eq170725d} implies that $m-1=\ell+1=n-1$, so $m=n$ as desired.
\end{proof}

For the next example, 
which is part of diagram~\eqref{diag170909a},
recall that the integral domain $D$ is a \emph{finite factorization domain (FFD)} if it is atomic
and for each non-zero non-unit $z\in D$ there are only  finitely many
distinct irreducible factorizations (up to associates) of $z$ in $D$.

\begin{ex}\label{ex170725d}
Let $k$ be a field.
Then the subring $k[X^2,X^3]\subseteq k[X]$ is an FFD (hence a BFD) that is not an HFD;
see, e.g., \cite[Sections~3 and~5]{AAZ}.
Essentially, this ring is not an HFD because $(X^2)^3=(X^3)^2$.
In particular, Theorem~\ref{thm170725a} implies that this ring is not an IDPD.
\end{ex}

Most of the rest of this section deals with classes of HFDs that are also IDPDs. 
In part, we do this to show how close HFDs are to IDPDs, and to show that many of the standard examples of HFDs are IDPDs,
that you have to work a bit to find an example of an HFD that is not an IDPD.
Also, we do this to show that our example of an HFD that is not an IDPD is minimal in some sense;
see Remark~\ref{disc170908a}.

This is~\eqref{itemthm170725b} from the introduction.

\begin{thm}\label{thm170725b}
Let $K$ be a field, and let $F\subseteq K$ be a subfield.
Then the subring $D=F+XK[X]\subseteq K[X]$ is an IDPD. 
\end{thm}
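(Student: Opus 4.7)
The plan is to classify the atoms of $D$ explicitly, describe divisibility in $D$ in terms of the $K[X]$-factorization, and then verify the IDPD condition by a short case analysis on the types of the two given divisors. Since $D \subseteq K[X]$ and the degree in $K[X]$ is additive and bounds the length of any factorization in $D$, the ring $D$ is atomic, and its units are exactly $F^*$.

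First, I would classify the atoms of $D$ as falling into two families: (I) the atoms $cX$ for $c \in K^*$, with $cX \sim c'X$ if and only if $c/c' \in F^*$; and (II) the atoms $\alpha p$, where $p \in K[X]$ is a monic irreducible with $p \neq X$ and $\alpha \in K^*$ satisfies $\alpha p(0) \in F^*$, which form a single associate class in $D$ for each fixed $p$. The verification uses the $K[X]$-factorization of $f \in D$ together with the constraint that the constant term of every factor in $D$ must lie in $F$: elements of types (I) and (II) admit no proper factorization in $D$, while any longer $K[X]$-factorization can be split into two non-unit factors of $D$ by rescaling one of the parts so that both constant terms land in $F$.

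Second, I would fix a nonzero non-unit $z \in D$ and write $z = c X^k \prod_{i=1}^r p_i^{e_i}$ in $K[X]$, with $c \in K^*$, $k \geq 0$, and distinct monic irreducibles $p_i \neq X$. Set $c_{\mathrm{red}} := c \prod_i p_i(0)^{e_i}$. A Type (II) atom associated to $p_j$ divides $z$ in $D$ exactly when $e_j \geq 1$; a Type (I) atom $\alpha X$ divides $z$ exactly when $k \geq 2$ (for any $\alpha \in K^*$), or when $k = 1$ and $\alpha \in c_{\mathrm{red}} F^*$ (the unique Type (I) divisor up to associates).

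Finally, given non-associate irreducible divisors $p,q$ of $z$, I would construct atoms $p', q'$ with $pp' \sim qq' \mid z$ by a three-case analysis. If $p = \alpha X$ and $q = \beta X$ are both of Type (I), then since $p \not\sim q$ the description above forces $k \geq 2$; take $p' = (c_{\mathrm{red}}/\alpha) X$ and $q' = (c_{\mathrm{red}}/\beta) X$, so that $pp' = c_{\mathrm{red}} X^2 = qq'$ and $z/(c_{\mathrm{red}} X^2) = X^{k-2} \prod_i (p_i/p_i(0))^{e_i} \in D$. If $p = \alpha X$ is Type (I) and $q = \beta p_j$ is Type (II), take $p' = p_j/p_j(0)$ and $q' = \bigl(\alpha/(\beta p_j(0))\bigr) X$, so that $pp' = (\alpha/p_j(0)) X p_j = qq'$. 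If $p = \alpha p_i$ and $q = \beta p_j$ are both Type (II) with $i \neq j$, take $p' = p_j/p_j(0)$ and $q' = p_i/p_i(0)$; then $pp' = (\alpha/p_j(0)) p_i p_j$ and $qq' = (\beta/p_i(0)) p_i p_j$ differ by the $F^*$-ratio $\alpha p_i(0)/(\beta p_j(0))$. In each case the one nontrivial verification is that both $pp'$ and $z/(pp')$ lie in $D$, which reduces to a constant-term calculation; the main obstacle is this bookkeeping, and it is precisely the freedom to choose arbitrary $K^*$-scalars in Type (I) atoms (together with the constraints $\alpha \in c_{\mathrm{red}} F^*$ when $k = 1$ and $c_{\mathrm{red}} \in F^*$ when $k = 0$) that allows the verifications to succeed.
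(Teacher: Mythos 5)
Your proposal is correct, and it takes a more self-contained route than the paper. The paper imports the atom classification and the atomicity of $D=F+XK[X]$ from the Anderson--Anderson--Zafrullah paper on rings between $D[X]$ and $K[X]$, observes that every atom \emph{not} of the form $aX$ is prime, and then invokes Proposition~\ref{prop190826a} to reduce the IDPD condition to pairs of non-prime, non-associate irreducible divisors, i.e.\ to the case $p=aX$, $q=bX$. You instead verify the atom classification and the divisibility criteria directly from degree and constant-term considerations and then check all three type-combinations by hand; this costs some bookkeeping but requires no external input and no primality observation. One point in your favor is worth flagging: the paper's proof concludes by asserting that two atoms $aX$ and $bX$ with $a,b\in K^\times$ must be associate in $D$, so that condition~($\dagger$) holds vacuously. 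That assertion is only true when $a/b\in F^\times$, and Example~\ref{ex170725b} of the paper itself exhibits non-associate atoms $2X$ and $\tfrac12X$ in $\bbq+X\bbr[X]$ both dividing $X^2$; so the remaining case is \emph{not} vacuous. Your Case~A --- observing that two non-associate type~(I) divisors force $k\ge 2$ and then taking $p'=(c_{\mathrm{red}}/\alpha)X$ and $q'=(c_{\mathrm{red}}/\beta)X$ so that $pp'=c_{\mathrm{red}}X^2=qq'$ divides $z$ in $D$ --- is exactly the argument needed to close that gap, so your version is actually the more complete one.
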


\begin{proof}
The ring $D$ is atomic by~\cite[Theorem~2.9]{AAZ2}.
Furthermore, this result shows that the atoms of $D$ are of the following form:
\begin{enumerate}[(1)]
\item $aX$ where $a\in K^\times$, and
\item\label{thm170725b2} $a(1+Xf)$ where $a\in F^\times$ and $f\in K[X]$ and $1+Xf$ is an atom in $K[X]$.
\end{enumerate}
In addition, $D$ is an HFD by~\cite[Theorem~5.3]{AAZ2}, and every atom from~\eqref{thm170725b2} is prime
by~\cite[Theorem~2.9]{AAZ2}.

To show that $D$ is an IDPD, we  show that condition~($\dagger$) from Proposition~\ref{prop190826a}
is satisfied vacuously. 
To this end, let $z\in D$ be a non-zero non-unit that is a product of non-prime atoms, and let $p,q\in D$ be non-associate 
non-prime irreducible divisors
of $z$. The preceding paragraph shows that $p$ and $q$ are of the form $aX$ and $bX$, respectively, where $a,b\in K^\times$.
It follows that $p$ and $q$ are associates in $D$, contradicting our assumptions.
\end{proof}

\begin{disc}\label{disc170802a}
The proof of the HFD version of Theorem~\ref{thm170725b} is quite different from our proof here. 
The point for HFDs seems to be the following straightforward fact:
If $D$ is an atomic subring of a domain $R$ such that $R$ is an HFD such that
every atom of $D$ is also an atom in $R$, then $D$ is also an HFD. 
One then shows that $D=F+XK[X]\subseteq K[X]$ is an HFD
by considering it as a subring of the UFD (hence, HFD) $K[X]$, using the characterization of atoms of $D$
from~\cite[Theorem~2.9]{AAZ2}.
On the other hand, it is not clear to us at this moment that one can detect the IDPD property similarly. 
Thus, we pose the following.
\end{disc}

\begin{question} \label{q170730d}
If $D$ is an atomic subring of a domain $R$ such that $R$ is an IDPD and 
every atom of $D$ is an atom in $R$, must $D$  also be an IDPD?
\end{question}

Here is another part of diagram~\eqref{diag170909a}.

\begin{ex}\label{ex170725b}
The rings $\bbq+X\bbr[X]$ and $\bbr+X\bbc[X]$ are not UFDs essentially because 
$(2X)(\frac 12X)=X^2$ and
$(iX)(-iX)=X^2$, respectively.
Also, these rings are not FFDs because $X^2=(\eta X)(\frac 1\eta X)$ has infinitely many non-associate divisors.
But these rings are IDPDs by Theorem~\ref{thm170725b}.

Note that this example also shows that IDPDs need not be integrally closed in their fields of fractions, since
neither $\bbq+X\bbr[X]$ nor $\bbr+X\bbc[X]$ is integrally closed.
Indeed, the quotient field of $\bbq+X\bbr[X]$ is $\bbr(X)$ since $r=(rX)/X$, but $\sqrt 2\in \bbr(X)$ is integral
over $\bbq+X\bbr[X]$ and not in $\bbq+X\bbr[X]$.
A similar argument works for $\bbr+X\bbc[X]$ using $i\in\bbc(X)\ssm\bbr+X\bbc[X]$.
Moreover, these rings are not Krull domains; see, e.g., \cite[p.~4]{AAZ}.

Moreover, this example shows that the IDPD property need not ascend along polynomial extensions. 
Indeed, If $D[T]$ is an IDPD, then it is an HFD, so we know from~\cite[Theorem~5.4]{coykendall:ehfd} that $D$ is integrally closed.
In particular, this shows that the examples from the previous paragraph are IDPDs whose polynomial extensions $D[T]$ are not IDPDs. 
\end{ex}

\begin{question} \label{q170730b}
Is there a version of Theorem~\ref{thm170725b} that fully characterizes the IDPD property for rings of the form $A+XB[X]$
or more generally $D+M$? 
What about for integer-valued polynomials or for pullbacks as in our work with 
Boynton~\cite{boynton:rpb,boynton:ccfrp}?
\end{question}

\begin{disc}\label{disc170727a}
If $D$ is an IDPD, it is natural to ask whether 
for every non-zero non-unit $z\in D$ and for every pair $p,q\in D$ of non-associate irreducible divisors
of $z$, must we have $pq\mid z$? The answer is
definitely not.
Indeed, if it did, then~\cite[Theorem~5.1]{coykendall:idg} would prove that $D$ is a UFD, which is not true in general (see,
e.g., Example~\ref{ex170725b}).
\end{disc}

The next result of this section builds on~\cite[Theorem~2.3]{coykendall:hfdqf}.
It is item~\eqref{itemthm170730b} from the introduction.

\begin{thm}\label{thm170730b}
The ring $\bbz[\sqrt{-3}]$ is an IDPD. 
Moreover, it is the unique, non-integrally closed imaginary quadratic IDPD.
\end{thm}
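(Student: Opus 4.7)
The plan is to prove the two assertions separately.

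For the first assertion, that $\bbz[\sqrt{-3}]$ is an IDPD, I would invoke Proposition~\ref{prop190826a} to reduce the task to verifying condition~($\dagger$). The first step is to classify the non-prime atoms of $\bbz[\sqrt{-3}]$ by passing to the integral closure $\bbz[\omega]$, a PID with $\omega = e^{2\pi i/3}$. Since $\bbz[\sqrt{-3}] = \bbz + 2\bbz[\omega]$, every element of $\bbz[\omega]$ divisible by $2$ automatically lies in $\bbz[\sqrt{-3}]$. Using the $2$-adic valuation on $\bbz[\omega]$, together with the observation that every odd prime of $\bbz[\omega]$ has an associate in $\bbz[\sqrt{-3}]$ (which is then prime in $\bbz[\sqrt{-3}]$), I would show that the non-prime atoms of $\bbz[\sqrt{-3}]$ are precisely the associates of $2$, $1+\sqrt{-3}$, and $1-\sqrt{-3}$.

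Next, any product $z$ of $n \geq 1$ non-prime atoms has the form $\pm 2^n \omega^k$ in $\bbz[\omega]$ for some $k \in \bbz/3\bbz$. The identities $4 = (1+\sqrt{-3})(1-\sqrt{-3})$, $(1+\sqrt{-3})^2 = -2(1-\sqrt{-3})$, and $(1-\sqrt{-3})^2 = -2(1+\sqrt{-3})$ imply that the products of pairs of non-prime atoms fall into three associate-classes, indexed by their residue modulo $3$. For $n \geq 3$, all three classes divide $z$, so condition~($\dagger$) follows by direct inspection for each of the three pairs $(p,q)$ of non-associate non-prime atoms. For $n = 2$, only the class matching $k$ divides $z$; but for each pair $(p,q)$, the list of admissible $(p',q')$ realizes all three classes, so the required matching always exists.

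For the uniqueness assertion, the argument is brief: Theorem~\ref{thm170725a} shows that every IDPD is an HFD, and \cite[Theorem~2.3]{coykendall:hfdqf} identifies $\bbz[\sqrt{-3}]$ as the unique non-integrally closed imaginary quadratic HFD, so the uniqueness follows.

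The main obstacle will be the $n = 2$ case of the first assertion, where only one associate-class of products divides $z$; the careful matching of each pair $(p,q)$ with a suitable $(p', q')$ hinges on the observation that each pair admits options realizing all three classes.
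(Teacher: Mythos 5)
Your proposal is correct and follows essentially the same route as the paper: uniqueness via Theorem~\ref{thm170725a} together with Coykendall's classification of non-integrally closed imaginary quadratic HFDs, and the IDPD property via the list of non-prime atoms $2$, $1+\sqrt{-3}$, $1-\sqrt{-3}$ and the relations among their pairwise products. Your $\bbz[\omega]$-bookkeeping (tracking products as $\pm 2^n\omega^k$) is just a more systematic way of carrying out the paper's case analysis and of justifying its claim that ``all other cases are multiples'' of the three listed ones.
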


\begin{proof}
The uniqueness comes from~\cite[Theorem~2.3]{coykendall:hfdqf}: if $D$ is a 
non-integrally closed imaginary quadratic IDPD, then it is a
non-integrally closed imaginary quadratic HFD, so $D \cong\bbz[\sqrt{-3}]$ by \emph{loc.\ cit.}
Thus, it remains to show that $\bbz[\sqrt{-3}]$ is an IDPD.
Recall that $\bbz[\sqrt{-3}]$ is an HFD by~\cite[Section 3, Example]{zaks}. 

A standard fact from number theory states that, up to associates, the non-prime atoms of $D$ are 2, $1+\sqrt{-3}$, and $1-\sqrt{-3}$.
Moreover, they satisfy the  equations
\begin{gather*}
2^2=(1+\sqrt{-3})(1-\sqrt{-3})\\
(1+\sqrt{-3})^2=2(1-\sqrt{-3})\qquad
(1-\sqrt{-3})^2=-2(1+\sqrt{-3}).
\end{gather*}
From this, and by symmetry, Proposition~\ref{prop190826a} shows that we need only consider $p=2$ and $q=1+\sqrt{-3}$, along with  three
cases for $z$ (all other cases are multiples of these): 

Case 1: $z=2^2=4$. In this case, use $p'=2$ and $q'=1-\sqrt{-3}$ to verify the defining property for IDPDs.

Case 2: $z=(1+\sqrt{-3})^2=2(1-\sqrt{-3})$. Use $p'=1-\sqrt{-3}$ and $q'=1+\sqrt{-3}$.

Case 3: $z=(1-\sqrt{-3})^2=-2(1+\sqrt{-3})$. Use $p'=1+\sqrt{-3}$ and $q'=2$.
\end{proof}

We end this section with some localization results for IDPDs, based on~\cite{MR1190405}.

\begin{defn}\label{defn190901a}
A saturated multiplicatively closed subset $U\subseteq D$ is \emph{splitting} if for each $x\in D$ there are elements
$a\in D$ and $s\in U$ such that $x=as$ and $aD\cap tD=atD$ for all $t\in U$. We call such a factorization an \emph{AAZ-factorization} of $a$,
after the authors of~\cite{MR1190405}.
\end{defn}

\begin{ex}\label{fact190901a}
If $D$ is atomic and $U$ is a saturated multiplicatively closed subset generated by prime elements of $D$, then $U$ is splitting by~\cite[Corollary~1.7]{MR1190405}.
Moreover, the proof of \emph{loc.\ cit.} shows that any $x\in D$ has a factorization of the form $x=as$ where $s\in U$ and $a$ is not divisible by any prime in $U$, and moreover any such factorization is an AAZ-factorization.
\end{ex}

Our first localization result is a version of~\cite[Theorem~2.1]{MR1190405} for IDPDs.

\begin{thm}\label{thm190901a}
Let $D$ be an IDPD and $U$ a splitting multiplicatively closed subset of $D$. Then $U^{-1}D$ is an IDPD.
\end{thm}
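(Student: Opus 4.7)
The plan is to reduce to the IDPD property of $D$ by lifting through AAZ-factorizations; the splitting hypothesis provides the translation between atoms of $D$ and atoms of $U^{-1}D$. The underlying fact (established via the logic of \cite[Theorem~2.1]{MR1190405}) is that each atom of $D$ either lies in $U$ (and becomes a unit of $U^{-1}D$) or is co-splitting in the sense of Definition~\ref{defn190901a} (and maps to an atom of $U^{-1}D$), and every atom of $U^{-1}D$ arises, up to associates, as the image of such a co-splitting atom of $D$. In particular $U^{-1}D$ is atomic: given a non-zero non-unit $\bar z\in U^{-1}D$, the AAZ-factorization $z=as$ of a lift reduces the problem to factoring the co-splitting part $a$ in the atomic domain $D$, and every atomic factor of $a$ must itself be co-splitting (else $a$ would not be).

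Fix a non-zero non-unit $\bar z\in U^{-1}D$ with non-associate irreducible divisors $\bar p,\bar q$. I would choose co-splitting atom representatives $p,q\in D\ssm U$ of $\bar p,\bar q$ and a co-splitting representative $z\in D$ of $\bar z$. Non-associateness descends from $U^{-1}D$ to $D$ automatically, since any $D$-associate relation between $p$ and $q$ lifts to $U^{-1}D$. The key transfer step is divisibility: from $\bar p\mid\bar z$, clearing denominators yields $uz=pw$ in $D$ for some $u\in U$ and $w\in D$, and the co-splitting relation $pD\cap uD=puD$ forces $u\mid w$, whence $p\mid z$ in $D$. The same argument gives $q\mid z$ in $D$.

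Now the IDPD hypothesis on $D$ yields atoms $p',q'\in D$ with $pp'\sim qq'\mid z$ in $D$. The main obstacle—and the only place in the descent that is not automatic—is ruling out the possibility that $p'$ or $q'$ lies in $U$, which would make its image a unit of $U^{-1}D$ rather than an atom. I would resolve this by contradiction: if $p'\in U$, then its image is a unit, giving $\bar p\sim\bar q\,\bar{q'}$ in $U^{-1}D$; since $\bar p$ is an atom and $\bar q$ is a non-unit, the image of $q'$ must also be a unit, whence $\bar p\sim\bar q$, contradicting the choice of $\bar p,\bar q$. Thus $p',q'\notin U$, their images in $U^{-1}D$ are atoms, and passing $pp'\sim qq'\mid z$ to $U^{-1}D$ gives the required relation $\bar p\,\bar{p'}\sim\bar q\,\bar{q'}\mid\bar z$, verifying the IDPD condition.
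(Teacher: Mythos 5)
Your proof is correct and follows essentially the same route as the paper's: lift $\bar p,\bar q$ to atoms of $D$ via AAZ-factorizations, invoke the IDPD property of $D$, and rule out the degenerate case where $p'$ or $q'$ becomes a unit of $U^{-1}D$ by exactly the same contradiction argument. The only (harmless) variation is in the divisibility transfer: you use the splitting relation $pD\cap uD=puD$ to cancel $u$ and conclude $p\mid z$ in $D$ itself, whereas the paper settles for $a\mid ux$ and $b\mid vx$ and applies the IDPD hypothesis to the $U$-multiple $uvx$; both work.
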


\begin{proof}
By~\cite[Theorem~2.1]{MR1190405}, the ring $U^{-1}D$ is atomic.
As in Definition~\ref{defn170725a},
let $x$ be a non-zero non-unit of $U^{-1}D$,
and let $p$ and $q$ be 
non-associate irreducible factors of $x$ in $U^{-1}D$. 
Clear denominators (so multiply by units) if necessary to assume that $x,p,q\in D$.

Fix AAZ-factorizations 
$p=as$ and $q=bt$.
By~\cite[Corollary~1.4(c)]{MR1190405}, the elements $a,b\in D$ are 
atoms of $D$. 
Moreover, if $a\sim b$ in $D$, then $p\sim a\sim b\sim q$ in $U^{-1}D$, contradicting our assumptions; thus $a$ and $b$ are not associates in $D$. 

Now, the elements $p$ and $a$ are associates in $U^{-1}D$,
so the condition $p\mid x$ in $U^{-1}D$ implies that $a\mid x$ in $U^{-1}D$.
It follows that there is an element $u\in U$ such that $a\mid ux$.
Similarly, there is an element $v\in U$ such that $b\mid vx$, so we have
$a,b\mid uvx$. Since $D$ is an IDPD, there are atoms $a',b'\in D$ such that $aa'\sim bb'\mid uvx$ in $D$,
hence $aa'\sim bb'\mid uvx\sim x$ in $U^{-1}D$.
Since $pa'\sim aa'\sim bb'\sim qb'$, it remains to show that $a'$ and $b'$ are atoms in $U^{-1}D$.

By~\cite[Lemma~1.1 and Proposition~1.5]{MR1190405} it suffices to show that $a'$ and $b'$ are not units in $U^{-1}D$.
Suppose by way of contradiction (and by symmetry) that $a'$ is a unit in $U^{-1}D$.
It follows that $a\sim aa'\sim bb'$ in $U^{-1}D$. Since $a$ and $b$ are atoms in $U^{-1}D$, it follows that $b'$ is also a unit in $U^{-1}D$.
Thus, we have $a\sim aa'\sim bb'\sim b$ in $U^{-1}D$, 
contradicting our assumptions.
\end{proof}

Our final localization result is an IDPD analog of~\cite[Theorem~3.1]{MR1190405}.

\begin{thm}\label{thm190901b}
Let  $U$ be a saturated multiplicatively closed subset generated by prime elements of $D$ such that $U^{-1}D$ is an IDPD.
Then $D$ is an IDPD.
\end{thm}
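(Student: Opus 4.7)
The plan is to verify condition~$(\dagger)$ of Proposition~\ref{prop190826a} for $D$ by pushing the data up to $U^{-1}D$, applying the IDPD hypothesis there, and pulling the witnesses back via uniqueness of AAZ-factorizations (Example~\ref{fact190901a}). Atomicity of $D$ is immediate from~\cite[Theorem~3.1]{MR1190405} since $U^{-1}D$ is atomic.

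Let $z \in D$ be a non-zero non-unit that is a product of non-prime atoms, and let $p, q \in D$ be non-associate non-prime irreducible divisors of $z$. The key observation is that each of $z, p, q$ is \emph{prime-to-$U$}, i.e., not divisible by any prime element of $U$: otherwise, such a prime $\pi \in U$ would, by primality, have to be associate to one of the non-prime atom factors appearing in an atomic factorization of the element, a contradiction. By Example~\ref{fact190901a}, each of $z, p, q$ is therefore (up to a unit) its own AAZ prime-to-$U$ part. Arguing as in the proof of Theorem~\ref{thm190901a} via~\cite[Corollary~1.4]{MR1190405}, it follows that $p$ and $q$ remain atoms of $U^{-1}D$. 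Moreover, uniqueness of AAZ-factorizations shows that any associate relation (respectively, divisibility relation) between prime-to-$U$ elements that holds in $U^{-1}D$ already holds in $D$; in particular $p \not\sim q$ in $U^{-1}D$.

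Applying the IDPD hypothesis for $U^{-1}D$ to $z$ with the non-associate atomic divisors $p, q$ yields atoms $P, Q \in U^{-1}D$ with $pP \sim qQ \mid z$ in $U^{-1}D$. By~\cite[Corollary~1.4]{MR1190405} we may rescale $P, Q$ by units of $U^{-1}D$ so that they become atoms of $D$ that are prime-to-$U$; then $pP$ and $qQ$ are also prime-to-$U$. The transfer principle of the previous paragraph then upgrades the relations $pP \sim qQ$ and $pP \mid z$ from $U^{-1}D$ back to $D$, yielding the desired witnesses $p' = P$ and $q' = Q$ in condition~$(\dagger)$. The main obstacle is this final descent: one must carefully match AAZ $U$-parts on both sides of each relation to ensure that the scaling units (and the quotient witnessing $pP \mid z$) lie in $D$ up to a unit of $D$, which hinges on uniqueness of AAZ-factorizations and saturation of $U$.
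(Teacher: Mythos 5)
Your proposal is correct and follows essentially the same route as the paper's proof: atomicity of $D$ from~\cite[Theorem~3.1]{MR1190405}, reduction to condition~$(\dagger)$ of Proposition~\ref{prop190826a}, checking that $p,q$ stay non-associate atoms in $U^{-1}D$, applying the IDPD hypothesis there, normalizing the witnesses to prime-to-$U$ atoms of $D$ via AAZ-factorizations, and descending the associate and divisibility relations by cancelling the primes of $U$ against prime-to-$U$ elements. The only (harmless) cosmetic difference is that the paper secures the descent by showing the witnesses $p',q'$ are non-prime in $D$, whereas you arrange prime-to-$U$-ness directly from the AAZ-factorization.
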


\begin{proof}
By~\cite[Theorem~3.1]{MR1190405}, the domain $D$ is atomic.
As in Proposition~\ref{prop190826a}, let $x$ be a non-zero non-unit of $D$ that is a product of non-prime atoms,
and let $p,q$ be non-associate, non-prime irreducible divisors of $x$ in $D$. In particular, since $U$ is generated by prime elements of $D$,
we have $p,q\notin U$, so $p,q$ are non-units in $U^{-1}D$. 
From~\cite[Lemma~1.1 and Proposition~1.9]{MR1190405}, the elements $p,q$ are irreducible in $U^{-1}D$.
Also, Example~\ref{fact190901a} shows that $p=p\cdot 1$ is an AAZ-factorization of $p$, and similarly for $q$,
so $p,q$ are not prime in $U^{-1}D$ by~\cite[Corollary~1.4(c)]{MR1190405}.

We claim that $p\not\sim q$ in $U^{-1}D$.
By way of contradiction, suppose that $p\sim q$ in $U^{-1}D$.
It follows that there are elements $u,v\in U$ such that $vp=uq$ in $D$.
Since $U$ is generated by primes and $p,q$ are not prime, it is straightforward to show that $v\sim u$ and $p\sim q$ in $D$, contradicting our assumptions.
This establishes the claim.

Now, apply the IDPD property for $U^{-1}D$ to find atoms $p',q'\in U^{-1}D$ such that $pp'\sim qq'\mid x$ in $U^{-1}D$.
Clear denominators and use AAZ-factorizations as in the proof of Theorem~\ref{thm190901a} to assume that
$p',q'$ are atoms in $D$.

We claim that $p'$ and $q'$ are not prime in $D$.
Suppose by way of contradiction (and by symmetry) that $p'$ is prime in $D$. The condition
$p'\mid x$ in $U^{-1}D$ implies that there is an element $w\in U$ such that $p'\mid wx$ in $D$.
If $p'\mid w$, then the fact that $U$ is saturated implies that $p'\in U$, so $p'$ is a unit in $U^{-1}D$, a contradiction.
Thus, the fact that $p'$ is prime implies that $p'\mid x$. However, our assumptions on $x$ imply that $x$ has no prime factors.
This final contradiction establishes our second claim.

Next, we show that $pp'\sim qq'$ in $D$. The condition $pp'\sim qq'$ in $U^{-1}D$ implies that there are 
elements $u,v\in U$ such that $upp'=vqq'$ in $D$. Since $p,p',q,q'$ are non-prime atoms in $D$ and $U$ is generated by primes,
it follows readily that in $D$ we have $u\mid v$ and $v\mid u$ so $u\sim v$. From this, we obtain $vpp'\sim upp'=vqq'$ 
and hence $pp'\sim qq'$ in $D$.

To complete the proof, we show that $qq'\mid x$ in $D$. Since $qq'\mid x$ in $U^{-1}D$, there are elements
$y\in D$ and $w\in U$ such that $qq'y=wx$. Since $q,q'$ are non-prime atoms and $w$ is a product of primes, it follows that $w\mid y$,
say $y=wz$ and so $wx=qq'y=qq'wz$. Thus, we have $x=qq'z$, i.e., $qq'\mid x$ in $D$, as desired.
\end{proof}

\section{Krull Domains, Dedekind Domains, HFDs, and IDPDs}\label{sec170906a}

In this section, we further investigate the relation between the HFD and IDPD property,
focusing on
Krull domains (e.g., of Dedekind domains)
where we can use the divisor class group to understand factorizations.
Accordingly, we begin here by summarizing some facts about  divisor class groups.
See the text of Fossum~\cite{fossum:dcgkd} for general properties
and the paper of Zaks~\cite{zaks} for relevant connections with HFDs.

\begin{disc}\label{disc170803a}
Let $D$ be a Krull domain, and let $\Cl(D)$ denote the divisor class group of $D$.
For divisorial ideals $\fa$ and $\fb$ of $D$, we write $\fa\equiv\fb$ if $\fa$ and $\fb$ represent the same class in $\Cl(D)$.
Note that Secton~1, line~9 of~\cite{AAZ}  says that $D$ is atomic.

For each non-zero non-unit $x\in D$, the ideal $xD$ can be decomposed as a finite intersection
of symbolic powers of height-1 primes of $D$, say $xD=\bigcap_{\ell=1}^L\q_\ell^{(e_\ell)}$ with each $e_\ell\geq 1$.
Zaks~\cite[Lemma~5.2]{zaks} points out that if $xD=\q^{(e)}$ and $e$ is the order of the class of $\q$ in $\Cl(D)$,
then $x$ is an atom. If, for instance, $\Cl(D)$ is an elementary 2-group, then we have $e\leq 2$ here.
Furthermore, from~\cite[Proposition~6.2]{zaks}, we know 
that $x$ is an atom if and only if there are no proper sub-intersections of the decomposition
$\bigcap_{\ell=1}^L\q_\ell^{(e_\ell)}$ that are principal. 

If $y$ is another non-zero non-unit of $D$ and $x$ is not necessarily an atom,
then we can combine decompositions of $xD$ and $yD$ to find a list of distinct height-1 primes $\p_1,\ldots,\p_T$
and integers $a_t,b_t\geq 0$ such that $xD=\bigcap_{t=1}^T\p_t^{(a_t)}$ and $yD=\bigcap_{t=1}^T\p_t^{(b_t)}$.
In this case, we have $xyD=\bigcap_{t=1}^T\p_t^{(a_t+b_t)}$ by~\cite[Proposition~7.2]{zaks}.
Similar expressions hold for finite products $x_1\cdots x_n$.
Furthermore, a straightforward localization argument shows that
$\bigcap_{t=1}^T\p_t^{(a_t)}=\bigcap_{t=1}^T\p_t^{(b_t)}$ if and only if $a_t=b_t$ for all $t$; 
moreover, we have
$\bigcap_{t=1}^T\p_t^{(a_t)}\subseteq\bigcap_{t=1}^T\p_t^{(b_t)}$ if and only if $a_t\geq b_t$ for all $t$.
\end{disc}

Some readers may wish to skip the proof of the next result on the first reading.

\begin{lem}\label{lem170903a}
Let $D$ be a Krull domain  such that $\Cl(D)$ is a (possibly infinite) direct sum of cyclic groups
$\phi\colon\Cl(D)\xra\cong\bbz^{(I)}\oplus\left(\bigoplus_{j\in J}\bbz/n_j\bbz\right)$ with each $n_j\geq 2$.
Let $\mathcal I$ denote the standard basis for $\bbz^{(I)}$, and let $\mathcal J$ denote the standard generating sequence for
$\bigoplus_{j\in J}\bbz/n_j\bbz$, both considered as subsets of $\bbz^{(I)}\oplus\left(\bigoplus_{j\in J}\bbz/n_j\bbz\right)$.
Set $-\mathcal I=\{-e\in\bbz^{(I)}\mid e\in\mathcal I\}$, and let $X$ denote the set of non-principal height-1 prime ideals of $D$.
Assume that the function $\psi\colon X\to\bbz^{(I)}\oplus\left(\bigoplus_{j\in J}\bbz/n_j\bbz\right)$ induced by the isomorphism $\phi$
has $\im(\psi)=\mathcal I\cup-\mathcal I\cup\mathcal J$.
If $D$ is an HFD, then $D$ is an IDPD.
\end{lem}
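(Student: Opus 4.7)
The plan is to invoke Proposition~\ref{prop190826a} to reduce to the case where $z\in D$ is a product of non-prime atoms and $p,q\in D$ are non-associate non-prime irreducible divisors of $z$; in particular, every height-$1$ prime in $\Supp(zD)$ is non-principal, hence has class in $\mathcal I\cup-\mathcal I\cup\mathcal J$. Using Remark~\ref{disc170803a}, write $zD=\bigcap_\p\p^{(a_\p)}$, $pD=\bigcap_\p\p^{(b_\p)}$, $qD=\bigcap_\p\p^{(c_\p)}$, so the hypotheses $p,q\mid z$ read $b_\p,c_\p\leq a_\p$ for every $\p$. Under the image hypothesis on $\psi$, the only minimal zero-sum sequences in $G:=\bbz^{(I)}\oplus\bigoplus_{j\in J}\bbz/n_j\bbz$ with letters in $\mathcal I\cup-\mathcal I\cup\mathcal J$ are the \emph{Type A-$i$ blocks} $\{e_i,-e_i\}$ and the \emph{Type B-$j$ blocks} consisting of $n_j$ copies of $g_j$. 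Hence every non-prime atom of $D$ is of one of these two flavors, and the class-level block decomposition of $zD$ is forced to contain $m_i$ Type A-$i$ blocks and $m_j$ Type B-$j$ blocks, where $m_i$ is the common multiplicity of the $\pm e_i$-classes in $zD$ and $n_jm_j=\sum_{\p\equiv g_j}a_\p$.

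The overall strategy is to construct, prime by prime, a principal divisor $wD=\bigcap_\p\p^{(e_\p)}$ with $\max(b_\p,c_\p)\leq e_\p\leq a_\p$, such that $wD-pD$ and $wD-qD$ each form a single minimal zero-sum block; then $w\mid z$ and $w\sim pp'\sim qq'$ for atoms $p',q'$ having divisors $wD-pD$ and $wD-qD$. Observe that $pq\mid z$ (so that one may take $wD=pD+qD$, $p'=q$, $q'=p$) whenever $b_\p+c_\p\leq a_\p$ at every $\p$; this holds automatically when $p,q$ are of different block types or of the same type with different indices (then $\Supp pD\cap\Supp qD=\emptyset$), and also when their supports share a prime $\p_0$ whose $zD$-multiplicity is at least $2$.

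The first non-trivial situation is when $p,q$ are both Type A-$i$ and share, up to symmetry, the $e_i$-prime $\p_1=\q_1$ with $a_{\p_1}=1$. Here I would argue $m_i\geq 2$ (otherwise $zD$'s single Type A-$i$ block would coincide with both $pD$ and $qD$, forcing $p\sim q$); hence some prime $\r\neq\p_1$ of class $e_i$ lies in $\Supp(zD)$. Choosing $wD=\p_1\cap\p_2\cap\q_2\cap\r$ (all multiplicities $1$) gives a principal divisor dividing $zD$, and the resulting atoms $p'=\q_2\cap\r$, $q'=\p_2\cap\r$ are of Type A-$i$, completing this case.

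The main technical obstacle is the case that $p,q$ are both Type B-$j$, since their supports can overlap with various multiplicities and $\max(pD,qD)$ typically has nonzero class in $G$. The plan is once more to invoke $m_j\geq 2$ (otherwise the unique Type B-$j$ block of $zD$ would give $pD=qD$, contradicting $p\not\sim q$), yielding $\sum_{\p\equiv g_j}a_\p\geq 2n_j$. Coupled with the elementary bound $\sum_{\p\equiv g_j}\max(b_\p,c_\p)\leq 2n_j$ (from $\sum_\p b_\p=\sum_\p c_\p=n_j$ together with $\max(b_\p,c_\p)\leq b_\p+(c_\p-b_\p)^+$), an integer-interpolation argument---presumably the content of one of the appendix lemmas---produces $e_\p\in[\max(b_\p,c_\p),a_\p]$ for $\p\equiv g_j$ (and $e_\p=0$ otherwise) with $\sum_\p e_\p=2n_j$. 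The resulting $wD$ has class $(2n_j)g_j=0$, so $w$ is an element of $D$; meanwhile $wD-pD$ and $wD-qD$ each carry total $g_j$-multiplicity $n_j$, hence are single Type B-$j$ atom blocks. This produces atoms $p',q'$ with $pp'\sim qq'=w\mid z$, finishing the proof.
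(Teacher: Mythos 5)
Your proposal is correct and follows essentially the same route as the paper's proof: reduce via Proposition~\ref{prop190826a}, classify the non-prime atoms into the two block types via the class group, dispose of the disjoint-support cases with $pq\mid z$, handle the overlapping Type A case by adjoining an auxiliary prime of class $e_i$ (guaranteed by the multiplicity count), and handle the overlapping Type B case with the $\max$-plus-interpolation argument, which is indeed exactly the content of Lemma~\ref{lem170903b} in the appendix. The only cosmetic differences are that the paper dresses the argument in an induction on $\ell(z)$ that its case analysis never actually invokes, and it derives $\sum a\geq 2n_j$ from a small claim about an index with $d<e$ rather than from your (equally valid) $m_j\geq 2$ observation.
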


\begin{proof}
Assume that $D$ is an HFD.
Set $G=\bbz^{(I)}\oplus\left(\bigoplus_{j\in J}\bbz/n_j\bbz\right)$.
We use the following notation for the height-1 primes of $D$.
The primes in $\psi^{-1}(\mathcal I)$ will be denoted using the symbol $\p$, possibly with indices, e.g., as $\p_i$ or $\p_{i,j}$.
The primes in $\psi^{-1}(-\mathcal I)$ will be denoted similarly using the symbol $\q$,
the primes in $\psi^{-1}(\mathcal J)$ will be denoted using the symbol $\fr$,
and the principal primes (if there are any) will be denoted using the symbol $\fs$.
For convenience, let $\mathcal K$ be an index set for  the (possibly empty) set of principal prime ideals of $D$.
Using this notation, the elements of $\Cl(D)$ are all represented as finite intersections of the form
\begin{equation}\label{eq170903a}
\fa=
\left(\bigcap_{i\in\mathcal I}\bigcap_{p_i}\p_{i,p_i}^{(a_{i,p_i})}\right)
\cap
\left(\bigcap_{i\in\mathcal I}\bigcap_{q_i}\q_{i,q_i}^{(b_{i,q_i})}\right)
\cap
\left(\bigcap_{j\in\mathcal J}\bigcap_{r_j}\fr_{j,r_j}^{(c_{j,r_j})}\right)
\cap
\left(\bigcap_{k\in\mathcal K}\fs_{k}^{(d_{k})}\right).
\end{equation}
Because of the arithmetic of $G$, the ideal $\fa$ is principal if and only if
for each $i\in\mathcal I$ and each $j\in\mathcal J$ we have
$\sum_{p_i}a_{i,p_i}=\sum_{q_i}b_{i,q_i}$ and $n_j\mid\sum_{r_j}c_{j,r_j}$.
Using Zaks' sub-intersection criterion from Remark~\ref{disc170803a}, we see that the principal ideals generated by 
atoms of $D$ are precisely those of the following form:
\begin{enumerate}[(1)]
\item \label{item170903a}
$\p_i\cap\q_i$ for some $i\in\mathcal I$,
\item \label{item170903b}
$\bigcap_{r_j}\fr_{j,r_j}^{(c_{j,r_j})}$ for some $j\in\mathcal J$ such that $\sum_{r_j}c_{j,r_j}=n_j$, or
\item \label{item170903c}
$\fs_k$.
\end{enumerate}
In particular, given a non-zero non-unit $z\in D$, write $\fa=zD$ as in~\eqref{eq170903a} and recall that $D$ is atomic;
the above description of the atoms of $D$ shows that the number of irreducible factors in an irreducible factorization of $z$ is exactly
$\sum_{i\in\mathcal I}\sum_{p_i}a_{i,p_i}+\sum_{j\in\mathcal J}\frac{1}{n_j}\sum_{r_j}c_{j,r_j}+\sum_{k\in\mathcal K}d_k$.
Thus, $D$ is an HFD. 

Let $\ell\colon D\ssm\{0\}\to\bbn_0=\{n\in\bbz\mid n\geq 0\}$ be given by
$\ell(a)=n$ where $a=up_1\cdots p_n$ in $D$ with $u$ being a unit and each $p_i$ being an atom.
This is well-defined since $D$ is an HFD.
For instance, given $a\in D\ssm\{0\}$ we have $\ell(a)=0$ if and only if $a$ is a unit, and $\ell(a)=1$ if and only if $a$ is an atom. 
Furthermore, we have $\ell(ab)=\ell(a)+\ell(b)$ for all $a,b\in D\ssm\{0\}$.

By Proposition~\ref{prop190826a}, let $z\in D$ be a non-zero non-unit which is a product of non-prime atoms, 
and let $p_1,q_1$ be non-prime non-associate irreducible factors of $z$ in $D$. 
To show that these elements satisfy the defining property in~\ref{defn170725a},
we argue by induction on $n=\ell(z)$.
In the base case $\ell(z)=1$, the element $z$ is an atom with irreducible factors $p_1$ and $q_1$.
It follows that $p_1\sim z\sim q_1$, contradicting our non-associate assumption on $p_1$ and $q_1$. 
For good measure, we therefore consider a second base case $\ell(z)=2$. In this case,
since $p_1$ and $q_1$ are irreducible factors of $z$, there are atoms $p_2,q_2\in D$ such that
$p_1p_2=z=q_1q_2$. Thus, the defining property in~\ref{defn170725a} is satisfied in this case.

For the induction step, assume that $n=\ell(z)>2$ and that the result holds for non-zero non-units $z'\in D$ with $\ell(z')=n-1$.
Write $p_1\cdots p_n=z=q_1\cdots q_n$ for some atoms $p_i$ and $q_i$. Since $z$ has a factorization as a product of non-prime atoms,
the ideas of Example~\ref{ex170725a} show that 
no $p_i$ is prime, nor is any $q_i$ prime. 
Thus, when we write $zD$ in the form~\eqref{eq170903a}
we have
$$
zD=
\left(\bigcap_{i\in\mathcal I}\bigcap_{p_i}\p_{i,p_i}^{(a_{i,p_i})}\right)
\cap
\left(\bigcap_{i\in\mathcal I}\bigcap_{q_i}\q_{i,q_i}^{(b_{i,q_i})}\right)
\cap
\left(\bigcap_{j\in\mathcal J}\bigcap_{r_j}\fr_{j,r_j}^{(c_{j,r_j})}\right).
$$
Furthermore, the specific form of the atoms given in the first paragraph of this proof show that
$p_1D$ is either of the form 
$\p_{i_1,p_{i_1}}\cap\q_{i_1,q_{i_1}}$ for some $i_1\in\mathcal I$, or
$\bigcap_{r_{j_1}}\fr_{j_1,r_{j_1}}^{(d_{j_1,r_{j_1}})}$ for some $j_1\in\mathcal J$ such that $\sum_{r_{j_1}}d_{j_1,r_{j_1}}=n_{j_1}$.
Similarly, 
$q_1D$ is either of the form 
$\p_{s_1,p_{s_1}}\cap\q_{s_1,q_{s_1}}$ for some $s_1\in\mathcal I$, or
$\bigcap_{r_{t_1}}\fr_{t_1,r_{t_1}}^{(e_{t_1,r_{t_1}})}$ for some $t_1\in\mathcal J$ such that $\sum_{r_{t_1}}e_{t_1,r_{t_1}}=n_{t_1}$.

Case 1: 
$p_1D=\p_{i_1,p_{i_1}}\cap\q_{i_1,q_{i_1}}$ for some $i_1\in\mathcal I$
and $q_1D=\bigcap_{r_{t_1}}\fr_{t_1,r_{t_1}}^{(e_{t_1,r_{t_1}})}$ for some $t_1\in\mathcal J$ such that 
$\sum_{r_{t_1}}e_{t_1,r_{t_1}}=n_{t_1}$.
The condition $p_1\mid z$ implies that 
$$\left(\bigcap_{i\in\mathcal I}\bigcap_{p_i}\p_{i,p_i}^{(a_{i,p_i})}\right)
\cap
\left(\bigcap_{i\in\mathcal I}\bigcap_{q_i}\q_{i,q_i}^{(b_{i,q_i})}\right)
\cap
\left(\bigcap_{j\in\mathcal J}\bigcap_{r_j}\fr_{j,r_j}^{(c_{j,r_j})}\right)
=zD\subseteq p_1D=\p_{i_1,p_{i_1}}\cap\q_{i_1,q_{i_1}}$$
and the condition $q_1\mid z$ implies that 
$$\left(\bigcap_{i\in\mathcal I}\bigcap_{p_i}\p_{i,p_i}^{(a_{i,p_i})}\right)
\cap
\left(\bigcap_{i\in\mathcal I}\bigcap_{q_i}\q_{i,q_i}^{(b_{i,q_i})}\right)
\cap
\left(\bigcap_{j\in\mathcal J}\bigcap_{r_j}\fr_{j,r_j}^{(c_{j,r_j})}\right)
=zD\subseteq q_1D=\bigcap_{r_{t_1}}\fr_{t_1,r_{t_1}}^{(e_{t_1,r_{t_1}})}.$$
Since the primes $\p_{i_1,p_{i_1}}$ and $\q_{i_1,q_{i_1}}$ do not occur in the list of $\fr_{t_1,r_{t_1}}$'s,
it follows that 
\begin{align*}
zD
&=
\textstyle
\left(\bigcap_{i\in\mathcal I}\bigcap_{p_i}\p_{i,p_i}^{(a_{i,p_i})}\right)
\cap
\left(\bigcap_{i\in\mathcal I}\bigcap_{q_i}\q_{i,q_i}^{(b_{i,q_i})}\right)
\cap
\left(\bigcap_{j\in\mathcal J}\bigcap_{r_j}\fr_{j,r_j}^{(c_{j,r_j})}\right)
\\
&\subseteq
\textstyle
\left(\p_{i_1,p_{i_1}}\cap\q_{i_1,q_{i_1}}\right)\cap\left(\bigcap_{r_{t_1}}\fr_{t_1,r_{t_1}}^{(e_{t_1,r_{t_1}})}\right)
\\
&=p_1q_1D.
\end{align*}
We conclude that $p_1q_1\mid z$,
so the defining property in~\ref{defn170725a} is satisfied in this case.

Case 2: 
$p_1D=\bigcap_{r_{t_1}}\fr_{t_1,r_{t_1}}^{(d_{t_1,r_{t_1}})}$ for some $t_1\in\mathcal J$ such that 
$\sum_{r_{t_1}}d_{t_1,r_{t_1}}=n_{t_1}$
and $q_1D=\p_{i_1,p_{i_1}}\cap\q_{i_1,q_{i_1}}$ for some $i_1\in\mathcal I$.
This case is handled as in Case~1?.

Case 3: 
$p_1D=\p_{i_1,p_{i_1}}\cap\q_{i_1,q_{i_1}}$ for some $i_1\in\mathcal I$,
and $q_1D=\p_{s_1,u_{s_1}}\cap\q_{s_1,v_{s_1}}$ for some $i_1\in\mathcal I$.
If $i_1\neq s_1$, then the list of primes $\p_{i_1,p_{i_1}},\q_{i_1,q_{i_1}},\p_{s_1,u_{s_1}},\q_{s_1,v_{s_1}}$ has no repetitions;
in this event, it follows as in Case~1 that $p_1q_1\mid z$ and we are done.
Thus, we assume without loss of generality that
$p_1D=\p_{i_1,p_{i_1}}\cap\q_{i_1,q_{i_1}}$ 
and $q_1D=\p_{i_1,u_{i_1}}\cap\q_{i_1,v_{i_1}}$ for some $i_1\in\mathcal I$.
If there are no repetitions in the 
list of primes $\p_{i_1,p_{i_1}},\q_{i_1,q_{i_1}},\p_{i_1,u_{i_1}},\q_{i_1,v_{i_1}}$
then we are done again as in Case~1, so assume that there is a repetition in the list.
That is, assume that $\p_{i_1,p_{i_1}}=\p_{i_1,u_{i_1}}$ or $\q_{i_1,q_{i_1}}=\q_{i_1,v_{i_1}}$.
We have assumed that $p_1$ and $q_1$ are not associate, i.e., $p_1D\neq q_1D$, so 
we cannot have both
$\p_{i_1,p_{i_1}}=\p_{i_1,u_{i_1}}$ and $\q_{i_1,q_{i_1}}=\q_{i_1,v_{i_1}}$.

By symmetry, we assume without loss of generality that $\p_{i_1,p_{i_1}}=\p_{i_1,u_{i_1}}$ and $\q_{i_1,q_{i_1}}\neq\q_{i_1,v_{i_1}}$.
Therefore, we have $a_{i_1,p_{i_1}},b_{i_1,q_{i_1}},b_{i_1,v_{i_1}}\geq 1$
so $\sum_{t_{i_1}}a_{i_1,t_{i_1}}=\sum_{u_{i_1}}b_{i_1,u_{i_1}}\geq 2$.
It follows that either $a_{i_1,p_{i_1}}\geq 2$ or there is a $w_{i_1}\neq p_{i_1}$ such that $a_{i_1,w_{i_1}}\geq 1$.
If $a_{i_1,p_{i_1}}\geq 2$, then as in Case~1 we have
\begin{align*}
zD
&=
\textstyle
\left(\bigcap_{i\in\mathcal I}\bigcap_{p_i}\p_{i,p_i}^{(a_{i,p_i})}\right)
\cap
\left(\bigcap_{i\in\mathcal I}\bigcap_{q_i}\q_{i,q_i}^{(b_{i,q_i})}\right)
\cap
\left(\bigcap_{j\in\mathcal J}\bigcap_{r_j}\fr_{j,r_j}^{(c_{j,r_j})}\right)
\\
&\subseteq
\textstyle
\p_{i_1,p_{i_1}}^{(2)}\cap\q_{i_1,q_{i_1}}\cap\q_{i_1,v_{i_1}}
\\
&=p_1q_1D
\end{align*}
and we are done.
On the other hand, if there is a $w_{i_1}\neq p_{i_1}$ such that $a_{i_1,w_{i_1}}\geq 1$, then we have
\begin{align*}
zD
&=
\textstyle
\left(\bigcap_{i\in\mathcal I}\bigcap_{p_i}\p_{i,p_i}^{(a_{i,p_i})}\right)
\cap
\left(\bigcap_{i\in\mathcal I}\bigcap_{q_i}\q_{i,q_i}^{(b_{i,q_i})}\right)
\cap
\left(\bigcap_{j\in\mathcal J}\bigcap_{r_j}\fr_{j,r_j}^{(c_{j,r_j})}\right)
\\
&\subseteq
\textstyle
\p_{i_1,p_{i_1}}\cap\p_{i_1,w_{i_1}}\cap\q_{i_1,q_{i_1}}\cap\q_{i_1,v_{i_1}}.
\end{align*}
Note that the ideal
$\p_{i_1,w_{i_1}}\cap\q_{i_1,v_{i_1}}$
is principal and generated by an atom, say 
$\p_{i_1,w_{i_1}}\cap\q_{i_1,v_{i_1}}=p'D$.
Continuing the previous display, we have
\begin{align*}
zD
&\subseteq
\textstyle
(\p_{i_1,p_{i_1}}\cap\q_{i_1,q_{i_1}})\cap(\p_{i_1,w_{i_1}}\cap\q_{i_1,v_{i_1}})
=(p_1D)(p'D)
=p_1p'D
\end{align*}
so the atom $p'$ satisfies $p_1p'\mid z$.
Similarly, there is an atom $q'\in D$ such that
$\p_{i_1,w_{i_1}}\cap\q_{i_1,q_{i_1}}=q'D$
and as above we have
\begin{align*}
zD
&\subseteq
\textstyle
(\p_{i_1,p_{i_1}}\cap\q_{i_1,v_{i_1}})\cap(\p_{i_1,w_{i_1}}\cap\q_{i_1,q_{i_1}})
=(q_1D)(q'D)
=q_1q'D
\end{align*}
so the atom $q'$ satisfies $q_1q'\mid z$
and furthermore
$$q_1q'D=\p_{i_1,p_{i_1}}\cap\p_{i_1,w_{i_1}}\cap\q_{i_1,q_{i_1}}\cap\q_{i_1,v_{i_1}}=p_1p'D$$
so $q_1q'\sim p_1p'$.
This completes Case~3.

Case~4:
$p_1D=\bigcap_{r_{j_1}}\fr_{j_1,r_{j_1}}^{(d_{j_1,r_{j_1}})}$ for some $j_1\in\mathcal J$ such that $\sum_{r_{j_1}}d_{j_1,r_{j_1}}=n_{j_1}$,
and
$q_1D=\bigcap_{r_{t_1}}\fr_{t_1,r_{t_1}}^{(e_{t_1,r_{t_1}})}$ for some $t_1\in\mathcal J$ such that $\sum_{r_{t_1}}e_{t_1,r_{t_1}}=n_{t_1}$.
As in Case~3, we assume without loss of generality that $t_i=j_i$,
so 
$p_1D=\bigcap_{r_{j_1}}\fr_{j_1,r_{j_1}}^{(d_{j_1,r_{j_1}})}$
and
$q_1D=\bigcap_{r_{j_1}}\fr_{j_1,r_{j_1}}^{(e_{j_1,r_{j_1}})}$
with $\sum_{r_{j_1}}d_{j_1,r_{j_1}}=n_{j_1}=\sum_{r_{j_1}}e_{j_1,r_{j_1}}$.

Claim: there is an index $r'_{j_1}$ such that $d_{j_1,r'_{j_1}}<e_{j_1,r'_{j_1}}$.
Suppose that no such index exists. 
Then $d_{j_1,r_{j_1}}\geq e_{j_1,r_{j_1}}$ for all $r_{j_1}$, so we have
$$p_1D=\bigcap_{r_{j_1}}\fr_{j_1,r_{j_1}}^{(d_{j_1,r_{j_1}})}
\subseteq\bigcap_{r_{j_1}}\fr_{j_1,r_{j_1}}^{(e_{j_1,r_{j_1}})}=q_1D.$$
since $p_1$ and $q_1$ are both atoms, it follows that $p_1D=q_1D$, contradicting the assumption $p_1\not\sim q_1$.

For each index $r_{j_1}$, set $M_{j_1,r_{j_1}}=\max(d_{j_1,r_{j_1}},e_{j_1,r_{j_1}})\leq c_{j_1,r_{j_1}}$.
(We are going to use a version of the standard LCM construction here.)
The above claim implies that there is an index $r'_{j_1}$ such that $d_{j_1,r'_{j_1}}<M_{j_1,r'_{j_1}}$.
Recall that $\sum_{r_{j_1}}c_{j_1,r_{j_1}}$ is divisible by $n_{j_1}$, say $\sum_{r_{j_1}}c_{j_1,r_{j_1}}=xn_{j_1}$.
It follows that
$$n_{j_1}
=\sum_{r_{j_1}}d_{j_1,r_{j_1}}
<\sum_{r_{j_1}}M_{j_1,r_{j_1}}
\leq\sum_{r_{j_1}}c_{j_1,r_{j_1}}
=xn_{j_1}
$$
so we have $x\geq 2$.
Furthermore, we have 
$M_{j_1,r_{j_1}}\leq d_{j_1,r_{j_1}}+e_{j_1,r_{j_1}}$,
so 
$$
\sum_{r_{j_1}}M_{j_1,r_{j_1}}
\leq\sum_{r_{j_1}}d_{j_1,r_{j_1}}+\sum_{r_{j_1}}e_{j_1,r_{j_1}}
=2n_{j_1}
\leq xn_{j_1}
=\sum_{r_{j_1}}c_{j_1,r_{j_1}}.
$$
Since only finitely many of the $c_{j_1,r_{j_1}}$ are non-zero,
Lemma~\ref{lem170903b}
provides integers $y_{j_1,r_{j_1}}\in\bbn_0$ such that
$y_{j_1,r_{j_1}}\leq c_{j_1,r_{j_1}}-M_{j_1,r_{j_1}}$ for all $r_{j_1}$ and such that 
$\sum_{r_{j_1}}M_{j_1,r_{j_1}}+\sum_{r_{j_1}}y_{j_1,r_{j_1}}=2n_{j_1}$.

For each  $r_{j_1}$, set $\delta_{r_{j_1}}=(M_{r_{j_1}}-d_{r_{j_1}})+y_{r_{j_1}}\geq 0$
and $\epsilon_{r_{j_1}}=(M_{r_{j_1}}-e_{r_{j_1}})+y_{r_{j_1}}\geq 0$.
Note that 
$$\sum_{r_{j_1}}\delta_{r_{j_1}}=\sum_{r_{j_1}}(M_{r_{j_1}}+y_{r_{j_1}})-\sum_{r_{j_1}}d_{r_{j_1}}=2n_{j_1}-n_{j_1}=n_{j_1}.$$
Thus, using the arithmetic in $G$, we see that the ideal
$\bigcap_{r_{j_1}}\fr_{j_1,r_{j_1}}^{(\delta_{j_1,r_{j_1}})}$ 
is principal generated by an atom $p'\in D$.
Similarly, the ideal
$\bigcap_{r_{j_1}}\fr_{j_1,r_{j_1}}^{(\epsilon_{j_1,r_{j_1}})}$ 
is principal generated by an atom $q'\in D$.
We will be done with this case and the proof of the lemma once we show that $p_1p'\sim q_1q'\mid z$.

From the arithmetic in $\Cl(D)$, we have
\begin{align*}
p_1p'D
&=\bigcap_{r_{j_1}}\fr_{j_1,r_{j_1}}^{(d_{j_1,r_{j_1}}+\delta_{j_1,r_{j_1}})}
=\bigcap_{r_{j_1}}\fr_{j_1,r_{j_1}}^{(M_{j_1,r_{j_1}}+y_{j_1,r_{j_1}})}
=\bigcap_{r_{j_1}}\fr_{j_1,r_{j_1}}^{(e_{j_1,r_{j_1}}+\epsilon_{j_1,r_{j_1}})}
=q_1q'D
\end{align*}
hence $p_1p'\sim q_1q'$.
Furthermore, the condition $y_{j_1,r_{j_1}}\leq c_{j_1,r_{j_1}}-M_{j_1,r_{j_1}}$
implies that $M_{j_1,r_{j_1}}+y_{j_1,r_{j_1}}\leq c_{j_1,r_{j_1}}$,
so
\begin{align*}
zD
&=
\textstyle
\left(\bigcap_{i\in\mathcal I}\bigcap_{p_i}\p_{i,p_i}^{(a_{i,p_i})}\right)
\cap
\left(\bigcap_{i\in\mathcal I}\bigcap_{q_i}\q_{i,q_i}^{(b_{i,q_i})}\right)
\cap
\left(\bigcap_{j\in\mathcal J}\bigcap_{r_j}\fr_{j,r_j}^{(c_{j,r_j})}\right)
\\
&\subseteq
\textstyle
\bigcap_{r_{j_1}}\fr_{j_1,r_{j_1}}^{(M_{j_1,r_{j_1}}+y_{j_1,r_{j_1}})}\\
&=q_1q'D
\end{align*}
and thus 
$p_1p'\sim q_1q'\mid z$, as desired.
\end{proof}

Next, we document some consequences of Lemma~\ref{lem170903a}.

\begin{prop}\label{prop170903a}
The Dedekind domain HFDs constructed in the proof of~\cite[Theorem~3.4]{chapman:hfds} are IDPDs.
\end{prop}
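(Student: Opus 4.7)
The plan is to verify that the Dedekind HFDs from~\cite[Theorem~3.4]{chapman:hfds} satisfy the hypotheses of Lemma~\ref{lem170903a}, and then apply that lemma directly.

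First I would recall Chapman's construction from the proof of~\cite[Theorem~3.4]{chapman:hfds}. That construction realizes a prescribed abelian group as the divisor class group of a Dedekind HFD, using a Claborn-style realization together with a careful choice of which classes are allowed to contain height-1 primes. The key information needed is the decomposition of $\Cl(D)$ as a direct sum of cyclic groups, together with the distribution of non-principal height-1 primes among the classes. Writing $\Cl(D)\cong\bbz^{(I)}\oplus\left(\bigoplus_{j\in J}\bbz/n_j\bbz\right)$, I would verify from Chapman's construction that a non-principal height-1 prime of $D$ exists in a given class if and only if that class belongs to $\mathcal I\cup-\mathcal I\cup\mathcal J$ (the standard basis of the free part together with its negatives, and the standard generators of the torsion part).

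Once this is verified, the map $\psi\colon X\to\bbz^{(I)}\oplus\bigl(\bigoplus_{j\in J}\bbz/n_j\bbz\bigr)$ from the set $X$ of non-principal height-1 primes to the class group has image exactly $\mathcal I\cup-\mathcal I\cup\mathcal J$, which is precisely the hypothesis of Lemma~\ref{lem170903a}. Since Chapman's domain is by construction an HFD, Lemma~\ref{lem170903a} immediately yields that it is an IDPD, proving the proposition.

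The main obstacle is step 3: confirming that the Chapman construction does in fact place exactly one prime (up to the labelling) in each of the ``standard'' classes $\mathcal I\cup-\mathcal I\cup\mathcal J$ and none in any other non-principal class. For the torsion part this matches what is needed to make $D$ an HFD (Zaks' criterion forces atoms to correspond to minimal principal intersections, which works when primes appear only in classes of the generating set). For the free part, one must check that Chapman's construction uses both $\mathcal I$ and $-\mathcal I$, since infinite-order classes need primes representing both $e$ and $-e$ for any principal ideals of the form $\p\cap\q$ (as in case~\eqref{item170903a} of Lemma~\ref{lem170903a}) to exist; otherwise no atoms of that type arise. Assuming Chapman's construction indeed has this form (which the chosen reference appears to supply), the proof reduces to citing the construction and invoking Lemma~\ref{lem170903a}.
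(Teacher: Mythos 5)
Your proposal takes exactly the same route as the paper: the paper's entire proof is the observation that the rings from the cited construction satisfy the hypotheses of Lemma~\ref{lem170903a} by construction, and then that lemma is applied. Your additional discussion of what must be checked about the distribution of height-1 primes among the classes is a sensible elaboration of that same one-line argument, so the proposal is correct and essentially identical in approach.
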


\begin{proof}
By construction, these rings satisfy the hypotheses of Lemma~\ref{lem170903a}.
\end{proof}

Here is a version of~\cite[Theorem~3.4]{zaks} for our setting.
Note that the hypotheses of \emph{loc.\ cit.} are slightly incorrect. 
Indeed, Zaks assumes that for every height-1 prime ideal $\p$ of $D$, there is a height-1 prime ideal $\q$ of $D$
such that $\p\cap\q\equiv D$.
What he means to assume (as one sees in his proof and in his applications) is that
for every height-1 prime ideal $\p$ of $D$, either $\p^{(2)}\equiv D$ or there is a height-1 prime ideal $\q$ of $D$
such that $\p\cap\q\equiv D$.

\begin{thm}\label{cor170903a}
Let $D$ be a Krull domain such that for every height-1 prime ideal $\p$ of $D$, either $\p^{(2)}\equiv D$ or 
there is a height-1 prime ideal $\q$ of $D$ such that $\p\cap\q\equiv D$.
If $D$ is an HFD, then $D$ is an IDPD.
\end{thm}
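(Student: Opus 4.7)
The strategy is to reduce the theorem to Lemma~\ref{lem170903a} by constructing an appropriate isomorphism $\phi\colon\Cl(D)\xra{\cong}\bbz^{(I)}\oplus\bigoplus_{j\in J}\bbz/n_j\bbz$ under which the map $\psi$ from non-principal height-1 primes satisfies $\im(\psi)=\mathcal{I}\cup-\mathcal{I}\cup\mathcal{J}$.

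First I would analyze the classes of non-principal height-1 primes using the hypothesis. For each such $\p$: either $\p^{(2)}\equiv D$, in which case $[\p]$ has order exactly $2$ in $\Cl(D)$ (it cannot be trivial since $\p$ itself is non-principal); or there is a height-1 prime $\q$ with $\p\cap\q\equiv D$, so that $[\q]=-[\p]$. Accordingly, I would split the set of distinct non-trivial classes containing a height-1 prime into a set of $2$-torsion classes (indexed by $J$, with $n_j=2$) and a set of non-$2$-torsion classes pairing into inverse pairs $\{[\p],[\q]\}$, from which I choose one representative each (indexed by $I$). Since the classes of height-1 primes generate $\Cl(D)$ for any Krull domain, these chosen classes together with their inverses generate $\Cl(D)$.

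The crucial step is to show that the resulting presentation is a direct sum, i.e., that there are no relations among the chosen generators beyond the order-$2$ relations on the $\mathcal{J}$-part. I would argue by contradiction: suppose a non-trivial such relation exists. Using Remark~\ref{disc170803a}, translate the relation into a principal ideal $xD$ expressible as a finite intersection of symbolic powers of height-1 primes. Then produce two irreducible factorizations of $x$ of different lengths---one by grouping factors into the atoms of type $\p^{(2)}$ and $\p\cap\q$ furnished by the hypothesis (as in the three atom types in the proof of Lemma~\ref{lem170903a}), and an alternate one obtained by exploiting the supposed extra relation to regroup the symbolic-power factors differently. This contradicts the HFD hypothesis, forcing the direct-sum structure $\Cl(D)\cong\bbz^{(I)}\oplus\bigoplus_{j\in J}\bbz/2\bbz$. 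With this decomposition, the requirement $\im(\psi)=\mathcal{I}\cup-\mathcal{I}\cup\mathcal{J}$ holds by our choice of generators, and Lemma~\ref{lem170903a} delivers that $D$ is an IDPD.

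The main obstacle is the independence argument above. Verifying that the HFD condition is strong enough to exclude all unforeseen relations among the chosen generators---and doing so in a way that uniformly covers both the torsion and torsion-free parts of $\Cl(D)$---is where the real work lies. All other steps (classifying primes, selecting generators, invoking Lemma~\ref{lem170903a}) are essentially bookkeeping once the direct-sum structure is secured.
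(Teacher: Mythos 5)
Your proposal is correct and follows the same overall route as the paper: reduce to Lemma~\ref{lem170903a} by showing that $\Cl(D)$ is the direct sum of a free abelian group and an elementary $2$-group, with the non-principal height-$1$ primes representing exactly the standard free generators, their inverses, and the order-$2$ generators. The difference is that the paper dispatches this structural step in one line by citing the proof of Zaks~\cite[Theorem~3.4]{zaks}, which establishes precisely that decomposition (and the required condition on $\im(\psi)$) under these hypotheses, whereas you sketch a direct argument for it. Your independence sketch is workable but needs one adjustment: after normalizing a putative extra relation to the form $\sum_i a_i[\p_i]+\sum_j b_j[\fr_j]=0$ with $a_i\geq 0$ and $b_j\in\{0,1\}$, and reducing to a minimal such relation via Zaks' sub-intersection criterion, the corresponding element $x$ with $xD=\bigcap_i\p_i^{(a_i)}\cap\bigcap_j\fr_j^{(b_j)}$ is itself an atom, so it does not admit two factorizations of different lengths; the HFD contradiction instead comes from the product $xy$, where $yD=\bigcap_i\q_i^{(a_i)}\cap\bigcap_j\fr_j^{(b_j)}$ is the analogous principal ideal built from the paired inverse primes. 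One then has $\ell(xy)=2$ on one hand, while $xyD=\bigcap_i\bigl(\p_i^{(a_i)}\cap\q_i^{(a_i)}\bigr)\cap\bigcap_j\fr_j^{(2b_j)}$ factors into $\sum_i a_i+\sum_j b_j$ atoms of the two permitted shapes $\p_i\cap\q_i$ and $\fr_j^{(2)}$, and the resulting equation $\sum_i a_i+\sum_j b_j=2$ forces the relation to be an expected one in every case (in particular this also rules out paired classes of finite order greater than $2$). With that repair your argument closes; what it buys over the paper's version is self-containedness, at the cost of essentially re-proving Zaks' structure theorem.
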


\begin{proof}
Assume that $D$ is an HFD.
The proof of~\cite[Theorem~3.4]{zaks} shows that our assumptions imply that 
$\Cl(D)$ is the direct sum of an elementary 2-group $E$ and a free abelian group.
In particular, $E$ is a direct sum of copies of $\bbz/2\bbz$. 
Furthermore, the proof of  \emph{loc.\ cit.} shows that the hypotheses of our Lemma~\ref{lem170903a} are satisfied,
so we conclude from our lemma that $D$ is an IDPD. 
\end{proof}

\begin{cor}\label{thm170802a}
Let $D$ be a Krull domain  such that $\Cl(D)$ is an elementary 2-group.
If $D$ is an HFD, then $D$ is an IDPD.
\end{cor}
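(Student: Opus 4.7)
The plan is to deduce this corollary directly from Theorem~\ref{cor170903a}, which was just established. Concretely, I would show that the hypothesis of that theorem is satisfied automatically whenever $\Cl(D)$ is an elementary 2-group, so that no appeal to the second alternative (existence of a prime $\q$ with $\p \cap \q \equiv D$) is needed.

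First I would recall the translation between divisor classes and symbolic powers. For a height-1 prime $\p$ of the Krull domain $D$, the class $[\p] \in \Cl(D)$ satisfies $k\cdot[\p] = [\p^{(k)}]$, so $\p^{(k)} \equiv D$ is equivalent to $k\cdot[\p]=0$ in $\Cl(D)$. In particular, $\p^{(2)} \equiv D$ is exactly the statement that $2\cdot[\p]=0$.

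Next, I would invoke the defining property of an elementary 2-group: every element has order dividing $2$. Applied to the class $[\p]$ of an arbitrary height-1 prime, this gives $2\cdot[\p]=0$, hence $\p^{(2)} \equiv D$ for every height-1 prime $\p$ of $D$. Thus the first alternative in the hypothesis of Theorem~\ref{cor170903a} is satisfied by every height-1 prime, and the second alternative is never needed.

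Finally, since $D$ is assumed to be an HFD and the hypotheses of Theorem~\ref{cor170903a} hold, that theorem immediately yields the conclusion that $D$ is an IDPD. There is essentially no obstacle in this argument; the only substantive point is the identification of the elementary-2-group condition on $\Cl(D)$ with the divisor-theoretic condition $\p^{(2)}\equiv D$, and all the heavy lifting (the combinatorial case analysis, the appeal to Lemma~\ref{lem170903a}) has already been carried out in the proof of Theorem~\ref{cor170903a}.
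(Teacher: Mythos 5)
Your proposal is correct and matches the paper's proof, which simply states that the corollary is immediate from Theorem~\ref{cor170903a}; you have merely spelled out the one-line verification that in an elementary 2-group every class $[\p]$ satisfies $2[\p]=0$, so $\p^{(2)}\equiv D$ holds for every height-1 prime and the first alternative of that theorem's hypothesis is always met. No gaps.
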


\begin{proof}
Immediate from Theorem~\ref{cor170903a}.
\end{proof}

\begin{cor}\label{cor170802a}
Let $D$ be a Krull domain.
If $|\Cl(D)|\leq 2$, then
$D$ is an IDPD.
\end{cor}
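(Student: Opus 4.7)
The plan is to reduce directly to Corollary~\ref{thm170802a}. Any abelian group of order at most $2$ is trivial or isomorphic to $\bbz/2\bbz$, hence in either case an elementary $2$-group, so the group-theoretic hypothesis of Corollary~\ref{thm170802a} is automatic. What remains is to verify that $D$ is an HFD.

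For this I would appeal to the classical result (Carlitz for Dedekind domains, extended to Krull domains by Zaks~\cite{zaks}) that any Krull domain with $|\Cl(D)|\leq 2$ is automatically an HFD. The case $|\Cl(D)|=1$ is the Krull--Nagata characterization of UFDs, so $D$ is a UFD and in particular an HFD. For $|\Cl(D)|=2$, I would use the symbolic-power machinery of Remark~\ref{disc170803a}: the atoms of $D$ are precisely the generators of the principal height-$1$ primes $\fs_k$ together with generators of ideals of the forms $\p^{(2)}$ and $\p\cap\q$ with $\p,\q$ distinct height-$1$ primes in the unique non-trivial class of $\Cl(D)$. Writing $zD=\bigcap_k\fs_k^{(d_k)}\cap\bigcap_t \p_t^{(a_t)}$ with the $\p_t$ in the non-trivial class, one sees that every irreducible factorization of $z$ has length exactly $\sum_k d_k+\tfrac12\sum_t a_t$, which depends only on $z$ and not on the factorization; hence $D$ is an HFD.

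With both hypotheses in hand, Corollary~\ref{thm170802a} immediately yields that $D$ is an IDPD. There is no substantive obstacle beyond invoking the classical HFD fact; the corollary is essentially the cleanest special case of Corollary~\ref{thm170802a}, recording that the smallest non-trivial class groups already force the IDPD property.
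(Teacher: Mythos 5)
Your proposal is correct and follows essentially the same route as the paper: observe that a class group of order at most $2$ is an elementary $2$-group, establish that $D$ is an HFD, and apply Corollary~\ref{thm170802a}. The only difference is that the paper simply cites Zaks~\cite[Theorem~1.4]{zaks} for the HFD step, whereas you sketch that argument via the symbolic-power description of atoms; your sketch is sound.
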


\begin{proof}
Zaks~\cite[Theorem~1.4]{zaks} tells us that $D$ is an HFD, 
so it is an IDPD by Corollary~\ref{thm170802a}.
\end{proof}

\begin{cor}\label{cor170803a}
Let $D$ be a Krull domain.
Then the following  are equivalent:
\begin{enumerate}[\rm(i)]
\item\label{cor170803a1}
$D[x]$ is an IDPD, 
\item\label{cor170803a2}
$D[x]$ is an HFD, and
\item\label{cor170803a3}
$|\Cl(D)|\leq 2$.
\end{enumerate}
\end{cor}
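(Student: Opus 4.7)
The plan is to prove the cycle of implications $(i)\Rightarrow(ii)\Rightarrow(iii)\Rightarrow(i)$.

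The implication $(i)\Rightarrow(ii)$ is immediate from Theorem~\ref{thm170725a}, since every IDPD is an HFD.

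For $(iii)\Rightarrow(i)$, I would appeal to two standard facts. First, the polynomial ring $D[x]$ over a Krull domain $D$ is itself a Krull domain. Second, the natural ring inclusion $D\hookrightarrow D[x]$ induces an isomorphism of divisor class groups $\Cl(D)\cong\Cl(D[x])$ (see~\cite{fossum:dcgkd}). Combining these with the hypothesis $|\Cl(D)|\leq 2$ gives $|\Cl(D[x])|\leq 2$, so Corollary~\ref{cor170802a} applied to $D[x]$ yields that $D[x]$ is an IDPD.

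The substantive step is $(ii)\Rightarrow(iii)$, for which I would invoke the classical characterization that for a Krull domain $D$, the polynomial ring $D[x]$ is an HFD if and only if $|\Cl(D)|\leq 2$; this is in the same spirit as~\cite[Theorem~5.4]{coykendall:ehfd}, already used in Example~\ref{ex170725b}. The forward direction proceeds by contrapositive: if $|\Cl(D)|\geq 3$, one uses the isomorphism $\Cl(D[x])\cong\Cl(D)$ together with Zaks' symbolic-power machinery from Remark~\ref{disc170803a} to exhibit a non-zero non-unit of $D[x]$ admitting two irreducible factorizations of different lengths. Explicitly, one picks either a non-principal height-1 prime whose class in $\Cl(D)$ has order at least $3$, or two non-associate height-1 primes lying in distinct non-trivial classes of $\Cl(D)$, and manufactures the counterexample from suitable symbolic powers of the corresponding height-1 primes of $D[x]$.

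The main obstacle is this external characterization used in $(ii)\Rightarrow(iii)$. If it is not available off-the-shelf in the form required, one would adapt the arguments of Zaks~\cite{zaks} and Lemma~\ref{lem170903a} directly to the Krull domain $D[x]$, using that its height-1 primes and divisor class group are governed by those of $D$. The other two implications are then routine bookkeeping with the preceding results of this section.
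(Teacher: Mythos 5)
Your proposal is correct and follows essentially the same route as the paper: (i)$\Rightarrow$(ii) via Theorem~\ref{thm170725a}, (iii)$\Rightarrow$(i) via the facts that $D[x]$ is Krull with $\Cl(D[x])\cong\Cl(D)$ together with the earlier corollaries, and (ii)$\Rightarrow$(iii) via the classical characterization of when $D[x]$ is an HFD. The only difference is bookkeeping: the paper cites Zaks~\cite[Theorem~2.4]{zaks} by name for that characterization (so no sketch of its proof is needed), and routes (iii)$\Rightarrow$(i) through Corollary~\ref{thm170802a} rather than Corollary~\ref{cor170802a}, which amounts to the same thing.
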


\begin{proof}
The implication~\eqref{cor170803a1}$\implies$\eqref{cor170803a2} is from Theorem~\ref{thm170725a}.
Also, Zaks~\cite[Theorem~2.4]{zaks} 
gives the implication~\eqref{cor170803a2}$\implies$\eqref{cor170803a3}.

\eqref{cor170803a3}$\implies$\eqref{cor170803a1}
Assume that $|\Cl(D)|\leq 2$. Zaks~\cite[Theorem~2.4]{zaks}
implies that $D[x]$ is an HFD. 
Recall that $D[x]$ is also a Krull domain such that $\Cl(D[x])\cong\Cl(D)$;
see, e.g., \cite[Proposition~1.6 and Theorem~8.1]{fossum:dcgkd}.
Thus, $D[x]$ is an IDPD by Corollary~\ref{thm170802a}.
\end{proof}

Next, we treat some Dedekind domains;
see the subsequent corollary for some rings of integers.
It is worth noting that one can prove this result directly, without resorting to
Corollary~\ref{thm170802a}, using ideas from~\cite[Carlitz's Theorem]{chapman:hfds} and~\cite[Theorem~2.4]{coykendall:ehfd}.

\begin{cor}\label{thm170730a}
Let $D$ be a Dedekind domain.
If $|\Cl(D)|\leq 2$, then
$D$ is an IDPD; the converse holds if $\Cl(D)$ is torsion and $D$ has the property that there is a prime ideal in each class.
\end{cor}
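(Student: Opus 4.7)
The plan is to split the biconditional into its two directions and dispatch each by reducing to results already established in the excerpt. Recall that every Dedekind domain is in particular a Krull domain, so the Krull-domain machinery of the preceding corollaries applies directly.

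For the forward implication, assume $D$ is a Dedekind domain with $|\Cl(D)| \leq 2$. Since $D$ is a Krull domain, Corollary~\ref{cor170802a} applies verbatim and gives that $D$ is an IDPD. This direction therefore requires no further work.

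For the converse, assume that $D$ is a Dedekind IDPD, that $\Cl(D)$ is torsion, and that every class of $\Cl(D)$ contains a (height-$1$) prime ideal. By Theorem~\ref{thm170725a}, $D$ is an HFD, so it remains to deduce that $|\Cl(D)|\leq 2$. This is precisely the content of the Carlitz-type characterization of Dedekind HFDs: under the standing hypothesis that every divisor class is represented by a prime and that $\Cl(D)$ is torsion, a Dedekind domain is an HFD if and only if $|\Cl(D)|\leq 2$. This is the result cited as \cite[Carlitz's Theorem]{chapman:hfds} (see also~\cite[Theorem~2.4]{coykendall:ehfd}), and applying it to our $D$ yields $|\Cl(D)|\leq 2$, as required.

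The only potentially subtle point is the converse, where one needs both hypotheses (torsion class group and prime representatives in every class): without them, there exist HFD Dedekind-type examples with larger class groups, as is evident from the constructions referenced in Proposition~\ref{prop170903a}. However, once Theorem~\ref{thm170725a} provides the HFD conclusion from the IDPD hypothesis, invoking Carlitz's theorem is immediate, and no additional argument is needed. I therefore expect no real obstacle beyond correctly invoking the cited Carlitz-type theorem in the converse direction.
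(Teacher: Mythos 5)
Your proposal is correct and follows essentially the same route as the paper: the forward direction is exactly Corollary~\ref{cor170802a} applied to the Krull domain $D$, and the converse passes from IDPD to HFD (Theorem~\ref{thm170725a}) and then invokes the Carlitz-type characterization cited as \cite[Theorem~2.4]{coykendall:ehfd}. The paper merely leaves the intermediate HFD step implicit, so no substantive difference remains.
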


\begin{proof}
One implication is from Corollary~\ref{cor170802a} since Dedekind domains are noetherian and integrally closed, hence Krull domains.
For the converse statement, assume that
$D$ is an IDPD such that $\Cl(D)$ is torsion and $D$ has the property that there is a prime ideal in each class.
Then we have $|\Cl(D)|\leq 2$ by~\cite[Theorem~2.4]{coykendall:ehfd}.
\end{proof}

\begin{cor}\label{thm170725c}
Let $K$ be a number field, and let $D$ be the corresponding ring of integers, i.e., $K$ is a finite extension
of $\bbq$ and $D$ is the integral closure of
$\bbz$ in $K$.
Then 
$D$ is an IDPD if and only if $D$ is an HFD, that is, if and only if $K$ has class number 1 or 2.
\end{cor}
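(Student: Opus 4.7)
The plan is to reduce the statement directly to Corollary~\ref{thm170730a}, which handles Dedekind domains. The ring of integers $D$ of a number field $K$ is a Dedekind domain, and hence a Krull domain, so Theorem~\ref{thm170725a} already gives the implication IDPD $\Rightarrow$ HFD. The classical theorem of Carlitz (stated, e.g., in \cite{chapman:hfds}) identifies ``HFD'' with ``class number $1$ or $2$'' for rings of integers, so that part of the biconditional is standard.

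The remaining task is the implication HFD $\Rightarrow$ IDPD, and I would get it as follows. Recall that the class group $\Cl(D)$ of a ring of integers is finite, hence torsion. Moreover, a classical consequence of Chebotarev's density theorem (or the Dirichlet-style theorem on primes in ideal classes proved long before Chebotarev by Hecke, Landau) is that \emph{every} ideal class of $D$ contains infinitely many prime ideals; in particular each class contains at least one prime. Thus $D$ satisfies the hypotheses of the converse direction in Corollary~\ref{thm170730a}: it is a Dedekind domain whose class group is torsion with a prime in each class. Therefore Corollary~\ref{thm170730a} applies and yields the equivalence
\[
D \text{ is an IDPD} \iff |\Cl(D)| \leq 2.
\]

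Chaining these together, $D$ is an IDPD iff $|\Cl(D)| \leq 2$ iff $D$ is an HFD (by Carlitz), iff $K$ has class number $1$ or $2$. The only substantive step that goes beyond quoting the corollaries already in the paper is invoking the existence of a prime in every ideal class of a ring of integers; this is not an obstacle, being a well-known classical result, but it is worth citing explicitly (for instance via Hecke's theorem or Chebotarev density).
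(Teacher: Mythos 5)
Your proof is correct and follows essentially the same route as the paper's: Carlitz's theorem identifies HFD with class number at most $2$, Theorem~\ref{thm170725a} gives IDPD $\Rightarrow$ HFD, and Corollary~\ref{thm170730a} supplies the converse. The one (harmless) redundancy is your appeal to the converse direction of Corollary~\ref{thm170730a}, and hence to the existence of a prime in every ideal class; once Carlitz reduces HFD to $|\Cl(D)|\leq 2$, only the unconditional forward implication of that corollary is needed, which is all the paper uses.
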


\begin{proof}
A result of Carlitz~\cite{carlitz} says that
$D$ is an HFD if and only if $K$ has class number 1 or 2, that is, if and only if $|\Cl(D)|\leq 2$;
see also~\cite[Carlitz's Theorem]{chapman:hfds}. 
If $D$ is an IDPD, then Theorem~\ref{thm170725a} implies that $D$ is an HFD.
For the converse, if $D$ is an HFD, then $D$ is a Dedekind domain with $|\Cl(D)|\leq 2$,
so $D$ is an IDPD by Corollary~\ref{thm170730a}.
\end{proof}

\begin{ex}\label{ex170725c}
The ring $\bbz[\sqrt{-5}]$ is not a UFD, essentially because $(2)(3)=6=(1+\sqrt{-5})(1-\sqrt{-5})$.
But this ring is an IDPD by Corollary~\ref{thm170725c}.
\end{ex}

Our next results give more Krull domains where HFD implies IDPD. 
First, some notation from~\cite{zaks}.

\begin{notn}\label{notn170805a}
Let $D$ be a Krull domain, and assume that $\Cl(D)$ is torsion.
Each $\fa\in\Cl(D)$ has the form $\fa=\bigcap_{i=1}^e\p_i^{(c_i)}$ where each $\p_i$ is a height-1 prime and $c_i\in\bbn_0$.
Write $L(\fa)=\sum_{i=1}^ec_i/o(\p_i)$ where $o(\p_i)$ is the order of $\p_i$ in $\Cl(D)$.
(This is where we use the torsion assumption on $\Cl(D)$.)
For each non-zero non-unit $x\in D$, set $\ell(x)=L(xD)$.
\end{notn}

\begin{disc}\label{disc170805a}
Let $D$ be a Krull domain, and assume that $\Cl(D)$ is torsion.
Zaks~\cite{zaks} notes that $\ell$ is a function from the set of non-zero non-units of $D$ to the positive rational numbers. 
In~\cite[Theorem~3.3]{zaks}, he shows that if 
for all non-zero non-units $x\in D$ we have $\ell(x)=1$ if and only if $x$ is an atom in $D$,
then $D$ is an HFD. (It is worth noting that, in particular, the condition $\ell(x)=1$ if and only if $x$ is an atom in $D$ implies that
$\im(\ell)\subseteq\bbz$. Indeed, Remark~\ref{disc170803a} implies that $D$ is atomic, so every non-zero non-unit $z\in D$,
we have an irreducible factorization $z=p_1\cdots p_n$, so $\ell(z)=\ell(p_1)+\cdots+\ell(p_n)=n\in\bbz$.
Example~\ref{ex170809a}  shows, however, that without the assumption on atoms
one need not have $\im(\ell)\subseteq\bbz$.)
Conversely, he shows that if $D$ is an HFD, then 
for all non-zero non-units $x\in D$ we have $\ell(x)=1$ if and only if $x$ is an atom in $D$.
In particular, when $D$ is an HFD, if $\fa$ is a principal ideal of $D$, then $L(\fa)\in\bbn$; Example~\ref{ex170805a} below
shows that the converse of this
statement fails in general.
This is good to keep in mind in the proofs of the subsequent results.
\end{disc}

\begin{ex}\label{ex170809a}
A result of Gilmer, Heinzer, and Smith~\cite[Theorem~5]{MR1434383} implies that there is a 
Dedekind domain $D$ with $\Cl(D)\cong\bbz_{3}^2$
such that the non-principal prime ideals of $D$ correspond 
precisely to the elements $(1,0),(0,1),(1,2)\in\bbz_{3}^2$.
In other words,  each non-principal prime ideal of $D$ corresponds to either $(1,0)$ or 
$(0,1)$ or $(1,2)$ in a fixed isomorphism  $\phi\colon\Cl(D)\xra\cong\bbz_{3}^2$,
and furthermore, there exist non-principal prime ideals $\p$, $\q$, and $\fr$ in $D$ correspond to 
$(1,0)$, $(0,1)$, and $(1,2)$, respectively. 
Computing orders in $\Cl(D)$, we have 
$o(\p)=o(\q)=o(\fr)=3$.
In particular, we have $L(\p^{(a)}\cap\q^{(b)}\cap\fr^{(c)})=\frac 13(a+ b+ c)$ for all $a,b,c\in\bbn_0$.

Now, the ideal $\fa=\p^{(2)}\cap\q\cap\fr$ is principal since in $\bbz_{3}^2$ it corresponds to $2(1,0)+(0,1)+(1,2)=(0,0)$.
(Moreover, using the same type of computation, it is straightforward to show that no proper sub-intersection of
$\p^{(2)}\cap\q\cap\fr$ is principal, so Remark~\ref{disc170803a} tells us that  $\fa=\pi D$ for some atom $\pi\in D$.)
On the other hand, using Zak's notation, we have $l(\pi)=L(\fa)=\frac 13(2+1+1)=4/3\notin\bbz$.
\end{ex}

\begin{ex}\label{ex170805a}
Let $D$ be a Krull domain such that $\Cl(D)$ is a non-cyclic elementary 2-group, and assume that $D$ is an HFD;
see, e.g., \cite[Example~9]{zaks} for examples.
Note that $D$ has two non-principal height-1 primes $\p$ and $\q$
such that $\p\not\equiv\q$ in $\Cl(D)$. 
Indeed, if this were not true, the fact that $\Cl(D)$ is generated by the height-1 primes would imply that $\Cl(D)$ is cyclic,
contradicting our assumption.

Now, the fact that $\p$ and $\q$ are not principle conspires with the fact that $\Cl(D)$ is an elementary 2-group
to show that $o(\p)=2=o(\q)$. Thus, $\p$ is its own inverse in $\Cl(D)$,
so the condition $\p\not\equiv\q$ implies that $\p\cap\q$ is not principle. On the other hand, we have
$L(\p\cap\q)=(1/2)+(1/2)=1$, as desired.
\end{ex}

In our next result we set $\bbz_n=\bbz/n\bbz$, and 
let 
$\bbz_{p^\infty}$ denote the direct limit of the directed system
$\bbz_p\into\bbz_{p^2}\into\bbz_{p^3}\into\cdots$.

This is item~\eqref{itemthm170903b} from the introduction.

\begin{thm}\label{thm170903b}
Let $p$ be a prime number, and
let $D$ be a Krull domain with $\Cl(D)$ isomorphic to $\bbz_{p^\infty}$
or $\bbz_{p^k}$ for some $k\in\bbn$.
If $D$ is an HFD, then $D$ is an IDPD.
\end{thm}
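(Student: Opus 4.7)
The plan is to adapt the divisor-theoretic bookkeeping of the proof of Lemma~\ref{lem170903a} to the cyclic $p$-group setting. Lemma~\ref{lem170903a} does not apply directly here: its hypotheses force every non-principal height-1 prime of $D$ to lie in a fixed standard generator class of $\Cl(D)$, whereas here distinct height-1 primes may populate many different classes of the cyclic $p$-group $\Cl(D)$.

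First, by Proposition~\ref{prop190826a}, it suffices to verify condition $(\dagger)$. So fix a non-zero non-unit $z\in D$ that is a product of non-prime atoms, together with non-associate, non-prime irreducible divisors $p,q$ of $z$; the goal is to produce non-prime atoms $p',q'\in D$ with $pp'\sim qq'\mid z$. Next, using Remark~\ref{disc170803a}, I would translate the problem into divisor-theoretic language by writing each of $zD$, $pD$, and $qD$ as a finite intersection $\bigcap_i\p_i^{(c_i)}$ of symbolic powers of height-1 primes, with the exponent sequences for $pD$ and $qD$ dominated termwise by that of $zD$. Since $\Cl(D)\cong\bbz_{p^k}$ or $\bbz_{p^\infty}$, every class has order a power of $p$, and by Remark~\ref{disc170805a} the HFD hypothesis forces every non-prime atom $\pi$ with $\pi D=\bigcap_i\p_i^{(c_i)}$ to satisfy $\sum_i c_i/o([\p_i])=1$. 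This normalization severely constrains the possible supports and exponent profiles of non-prime atoms.

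From there I would proceed by case analysis on the supports of $pD$ and $qD$, modeled on Cases~1--4 of the proof of Lemma~\ref{lem170903a}. When the supports are disjoint in the set of height-1 primes, the inclusions $zD\subseteq pD$ and $zD\subseteq qD$ give $zD\subseteq pqD$ directly, so $pq\mid z$ and we take $p'=q$, $q'=p$. When the supports overlap, I would form the termwise maximum $M_i=\max(c_i^{(p)},c_i^{(q)})$ of the two exponent sequences and, using the numerical lemmas of Appendix~\ref{sec170903a}, choose nonnegative ``excess'' exponents $y_i\le c_i^{(z)}-M_i$ so that $\bigcap_i\p_i^{(M_i+y_i)}$ factors as $pD\cdot p'D=qD\cdot q'D$ for suitable atoms $p',q'$ whose classes balance the principality constraint in $\Cl(D)$. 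The cyclic $p$-group structure of $\Cl(D)$, together with the HFD normalization $\sum c_i/o([\p_i])=1$, is what allows this redistribution to exist, since in a cyclic $p$-group every element is a $\bbz$-multiple of any generator of maximal available order.

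The main obstacle will be the combinatorial bookkeeping in the overlapping case. Unlike the Lemma~\ref{lem170903a} setup, where each non-principal class was represented by a single standard generator, here many height-1 primes may share a class or populate different classes of $\Cl(D)$, so the excess exponents $y_i$ must be chosen to simultaneously (i) keep the resulting divisors principal in $\Cl(D)$, (ii) ensure $p'$ and $q'$ really are atoms (no proper sub-intersection principal, via Remark~\ref{disc170803a}), (iii) keep $pp'$ associate to $qq'$, and (iv) stay bounded by the exponent sequence of $zD$. The appendix lemmas on natural numbers are designed precisely to guarantee the existence of such $y_i$, and the cyclicity and $p$-primary nature of $\Cl(D)$ is what reduces the class-group balancing to a one-dimensional arithmetic problem that these lemmas can handle.
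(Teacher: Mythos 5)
There is a genuine gap. Your outline correctly identifies the shape of the argument (translate to symbolic-power decompositions via Remark~\ref{disc170803a}, take termwise maxima of the exponent profiles of $pD$ and $qD$, and use the appendix lemmas to redistribute excess exponents), and this is indeed the skeleton of the paper's proof. But the step where you ``balance the principality constraint in $\Cl(D)$'' is exactly the point that needs a new input, and the justification you offer --- that in a cyclic $p$-group every element is a multiple of a generator --- is about the group, not about which classes are actually occupied by height-1 primes. The paper's proof rests on Zaks' Theorem~8.4: for an HFD with $\Cl(D)\cong\bbz_{p^k}$ or $\bbz_{p^\infty}$, any two height-1 primes $\p,\q$ satisfy $\q\equiv\p^{(p^m)}$ or $\p\equiv\q^{(p^m)}$, so the occupied classes form a chain under ``is a $p$-power multiple of.'' This chain structure is what makes (i) $L(\fa)\in\bbz$ \emph{equivalent} to principality and $L(\fa)=1$ equivalent to ``generated by an atom'' (in general $L(\fa)=1$ does not even imply principality --- see Example~\ref{ex170805a}), and (ii) the whole redistribution a one-dimensional congruence that Lemma~\ref{lem170903c} can solve. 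Your proposal never establishes or invokes this fact; you even note that ``distinct height-1 primes may populate many different classes'' but do not resolve the resulting obstruction.

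That this is not a bookkeeping issue but a genuine mathematical one is shown by the paper's Theorem~\ref{thm170908a}: a Dedekind HFD with $\Cl(D)\cong\bbz_6$ and primes in the classes $1,2,3$ fails to be an IDPD, precisely because $\{1,2,3\}$ is not totally ordered by divisibility. So for cyclic class groups in general the atoms $p',q'$ you hope to construct need not exist; the $p$-primary hypothesis enters only through the chain property guaranteed by Zaks' theorem (or, for finite cyclic groups, by Lemma~\ref{prop170909a} and Lemma~\ref{lem170906c}). To repair the proof, insert the appeal to~\cite[Theorem~8.4]{zaks}, use it to partition the support of $zD$ into levels $X_0,\dots,X_I$ with $\q\equiv\p^{(p^{i-j})}$ across levels, prove the resulting principality criterion, and only then run the max-plus-redistribution argument with Lemma~\ref{lem170903c}. (Minor points: the reduction via Proposition~\ref{prop190826a} and the separate disjoint-support case are harmless but unnecessary; the paper verifies Definition~\ref{defn170725a} directly, and the max construction subsumes the disjoint case.)
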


\begin{proof}
Assume that $D$ is an HFD. 
Zaks~\cite[Theorem~8.4]{zaks} tells us that for all height-1 prime ideals $\p$ and $\q$ of $D$, there is an integer $m\in\bbn$ such that
either $\p=\q^{(p^m)}$ or $\q=\p^{(p^m)}$.
From this, we have the following: for every finite set $X$ of height-1 prime ideals, 
there is an integer $I\in\bbn_0$ such that
one can partition $X$ into a union of pairwise disjoint subsets $X_0,\ldots,X_I$ such that
(a) for all $\p\in X_i$ and all $\q\in X_j$ with $j\leq i$ we have $\q\equiv\p^{(p^{i-j})}$, and 
(b) for all $\p\in X_i$ the order of $\p$ in $\Cl(D)$ is $o(\p)=p^i$.

Let $\fa\subsetneq R$ be a divisorial ideal of $D$.
Then $\fa$ has a decomposition as a finite intersection of symbolic powers $\p^{(a(\p))}$ of height-1 prime ideals with exponents $a(\p)\geq 1$. 
Let $X$ be the set of prime ideals that occur in this decomposition. 
Partition the set $X$ as in the preceding paragraph, then we have
\begin{equation}\label{eq170903b}
\fa=\bigcap_{i=0}^I\bigcap_{\p\in X_i}\p^{(a(\p))}.
\end{equation}
Assume without loss of generality that $X_I\neq\emptyset$, and let $\q\in X_J$ for some $J\geq I$.
Condition~(a) from the preceding paragraph implies that
$$
\fa
=\bigcap_{i=0}^I\bigcap_{\p\in X_i}\p^{(a(\p))}
=\q^{(\sum_{i=0}^Ip^{J-i}\sum_{\p\in X_i}a(\p))}.
$$
In particular, since $o(\q)=p^J$ in $\Cl(D)$, the ideal $\fa$ is principal if and only if
$\sum_{i=0}^Ip^{J-i}\sum_{\p\in X_i}a(\p)\in p^J\bbz$, that is, if and only if $L(\fa)\in\bbz$,
where $L$ is as in Notation~\ref{notn170805a}. (One 
of these implications is from Zaks~\cite[Theorem~3.3]{zaks} and, as we have noted in Remark~\ref{disc170805a} it holds for any HFD with torsion
class group. 
The other implication, though, is special to this situation, as we saw in Example~\ref{ex170805a}.)
Furthermore, if $\fa$ is principal, then it is generated by an atom of $D$ if and only if $L(\fa)=1$,
i.e., if and only if $\sum_{i=0}^Ip^{J-i}\sum_{\p\in X_i}a(\p)= p^J$.

Let  $\ell$ be as in Notation~\ref{notn170805a}.
Let $z\in D$ be a non-zero non-unit, and let $\pi,\tau$ be non-associate irreducible factors of $z$ in $D$. 
To show that $\pi$ and $\tau$ satisfy the defining property in~\ref{defn170725a},
set $r=\ell(z)$. 
As in the proof of Lemma~\ref{lem170903a}, note that~$r\geq 2$.

Decompose the ideal $\fa=zD$ as in~\eqref{eq170903b}
$$zD=\bigcap_{i=0}^I\bigcap_{\p\in X_i}\p^{(a(\p))}
$$
for some $a(\p)\in\bbn$.
Assume without loss of generality that $X_I\neq\emptyset$, and let $\q\in X_I$.
Our above analysis shows that 
$\sum_{i=0}^Ip^{I-i}\sum_{\p\in X_i}a(\p)= rp^I$ where $r=\ell(z)\in\bbn$.

Our assumptions on $z$, $\pi$, and $\tau$ imply that $zD\subseteq\pi D\cap\tau D$.
Using the decomposition of $zD$ above, it follows that 
for all $\p\in X$ there are $b(\p),c(\p)\in\bbn_0$ with $b(\p),c(\p)\leq a(\p)$ and such that
$\pi D=\bigcap_{i=0}^I\bigcap_{\p\in X_i}\p^{(b(\p))}$
and
$\tau D=\bigcap_{i=0}^I\bigcap_{\p\in X_i}\p^{(c(\p))}$.
Since $\pi$ and $\tau$ are atoms, our above analysis shows that 
$$\sum_{i=0}^Ip^{I-i}\sum_{\p\in X_i}b(\p)= p^I=\sum_{i=0}^Ip^{I-i}\sum_{\p\in X_i}c(\p).$$

For each $\p\in X$, let $M(\p)=\max(b(\p),c(\p))\leq a(\p)$.
As in the proof of Lemma~\ref{lem170903a} Case~4, 
we have
\begin{align*}
\sum_{i=0}^Ip^{I-i}\sum_{\p\in X_i}M(\p)
&\leq \sum_{i=0}^Ip^{I-i}\sum_{\p\in X_i}b(\p)+\sum_{i=0}^Ip^{I-i}\sum_{\p\in X_i}c(\p)
\\
&=2p^I
\\
&\leq rp^I
\\
&=\sum_{i=0}^Ip^{I-i}\sum_{\p\in X_i}a(\p).
\end{align*}
Lemma~\ref{lem170903c}
implies that there is a function $f\colon X\to\bbn_0$ such that $M(\p)+f(\p)\leq a(\p)$ for all $\p\in X$
and such that $\sum_{i=0}^Ip^{I-i}\sum_{\p\in X_i}(M(\p)+f(\p))= 2p^I$.

Define $\beta,\gamma\colon X\to\bbn_0$ as
$\beta=(M-b)+f$
and
$\gamma=(M-c)+f$.
Again, as in the proof of Lemma~\ref{lem170903a} Case~4, 
we have
\begin{align*}
\sum_{i=0}^Ip^{I-i}\sum_{\p\in X_i}\beta(\p)
&=\sum_{i=0}^Ip^{I-i}\sum_{\p\in X_i}(M(\p)+f(\p))
-\sum_{i=0}^Ip^{I-i}\sum_{\p\in X_i}b(\p)
\\
&=2p^I-p^I
\\
&=p^I
\intertext{and similarly}
\sum_{i=0}^Ip^{I-i}\sum_{\p\in X_i}\gamma(\p)
&=p^I.
\end{align*}
Our analysis at the beginning of this proof shows that the ideals
$\bigcap_{i=0}^I\bigcap_{\p\in X_i}\p^{(\beta(\p))}$
and
$\bigcap_{i=0}^I\bigcap_{\p\in X_i}\p^{(\gamma(\p))}$
are principal and generated by atoms:
$\bigcap_{i=0}^I\bigcap_{\p\in X_i}\p^{(\beta(\p))}=\pi'D$
and
$\bigcap_{i=0}^I\bigcap_{\p\in X_i}\p^{(\gamma(\p))}=\tau'D$.
Furthermore, we have
\begin{align*}
\pi\pi'D
&=\bigcap_{i=0}^I\bigcap_{\p\in X_i}\p^{(b(\p)+\beta(\p))}\\
&=\bigcap_{i=0}^I\bigcap_{\p\in X_i}\p^{(M(\p)+f(\p))}\\
&=\bigcap_{i=0}^I\bigcap_{\p\in X_i}\p^{(c(\p)+\gamma(\p))}\\
&=\tau\tau'D
\end{align*}
so $\pi\pi'\sim\tau\tau'$.
Also, the condition $M(\p)+f(\p)\leq a(\p)$ for all $\p\in X$ implies that
\begin{align*}
zD
&=\bigcap_{i=0}^I\bigcap_{\p\in X_i}\p^{(a(\p))}
\subseteq
\bigcap_{i=0}^I\bigcap_{\p\in X_i}\p^{(c(\p)+\gamma(\p))}
=\tau\tau'D
\end{align*}
so $\pi\pi'\sim\tau\tau'\mid z$.
Thus, the defining property in~\ref{defn170725a} is satisfied, so $D$ is an IDPD, as desired.
\end{proof}

The proof of the next result is  similar to the previous one.
However, we require a bit of notation.

\begin{notn}\label{notn170906a}
Let  $D$ be a Krull domain such that $\Cl(D)$ is cyclic  of order $n\in\bbn$
and there is a height-1 prime ideal $\p$ whose class in $\Cl(D)$ generates $\Cl(D)$.
By assumption, there is an isomorphism $\phi\colon\bbz_n\xra\cong\Cl(D)$ such that $\phi([1])$ is the class of $\p$ in $\Cl(R)$. 
For any $s\in\bbz$ and any divisorial ideal $\fa$, we write $s\approx \fa$ to mean that $\phi([s])$ is the class of $\fa$ in $\Cl(D)$.
Set
$$S(D)=\{s\in\bbz\mid\text{$1\leq s\leq n$ and $s\approx\q$ for some height-1 prime ideal $\q$}\}.$$
We often set $S=S(D)$ when it does not result in any ambiguity.
\end{notn}

The set $S(D)$ in Notation~\ref{notn170906a} appears to depend on the choice of isomorphism $\phi$. 
However, one consequence of the next result is that, when $D$ is an HFD, it does not depend on the choice of $\phi$;
indeed, the result shows that there is at most one height-1 prime ideal representing an element of order $n$ in
$\Cl(D)\cong\bbz_n$.

\begin{lem}\label{prop170909a}
Let  $D$ be a Krull domain such that $\Cl(D)$ is cyclic of order $n$. 
Assume that $D$ is an HFD. 
Then for each divisor $r\mid n$ there is at most one element of $\Cl(D)$ of order $r$
represented by a height-1 prime ideal.
\end{lem}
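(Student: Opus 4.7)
The plan is to argue by contradiction, using the length function $L$ of Notation~\ref{notn170805a} together with Zaks' characterization recalled in Remark~\ref{disc170805a}: in an HFD $D$, any principal divisorial ideal $\fa=xD$ generated by a non-zero non-unit $x$ satisfies $L(\fa)\in\bbn$, hence $L(\fa)\geq 1$. Suppose $\p$ and $\q$ are two height-1 prime ideals of $D$ whose classes in $\Cl(D)\cong\bbz_n$ both have order $r$; the goal is to derive $[\p]=[\q]$. The case $r=1$ is immediate (the identity is the only class of order $1$), so the first step is to assume $r\geq 2$.

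Next, fix an isomorphism $\Cl(D)\cong\bbz_n$ and parametrize the elements of order $r$ as $(n/r)\sigma$ with $\sigma\in\{1,\dots,r-1\}$ coprime to $r$. Write $[\p]=(n/r)\sigma$ and $[\q]=(n/r)\tau$, and suppose for contradiction $\sigma\not\equiv\tau\pmod r$. When $r=2$ the only admissible value is $\sigma=1=\tau$, an immediate contradiction, so one may assume $r\geq 3$. Let $u$ be the representative in $\{1,\dots,r-1\}$ of $\tau\sigma^{-1}\pmod r$; the assumption $\sigma\not\equiv\tau\pmod r$ gives $u\neq 1$, and coprimality of $\tau$ with $r$ gives $u\neq 0$, so $2\leq u\leq r-1$.

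The key step is to test the divisorial ideal $\fa:=\p^{(r-u)}\cap\q$. Its class in $\Cl(D)$ is $(r-u)[\p]+[\q]=(n/r)\bigl((r-u)\sigma+\tau\bigr)\equiv (n/r)(\tau-u\sigma)\equiv 0\pmod n$ by the choice of $u$; hence $\fa$ is principal, say $\fa=\pi D$, with $\pi$ a non-zero non-unit since the exponents $r-u$ and $1$ are both at least $1$. However, $L(\fa)=(r-u)/o(\p)+1/o(\q)=(r-u+1)/r\leq (r-1)/r<1$, contradicting the HFD lower bound $L(\fa)\geq 1$. Therefore $\sigma\equiv\tau\pmod r$, i.e., $[\p]=[\q]$.

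The main subtlety lies in spotting the right test ideal $\p^{(r-u)}\cap\q$: the principality condition rewrites as $\tau\equiv u\sigma\pmod r$, which is automatic from the definition of $u$, while the sum of exponents $r-u+1$ is strictly less than $r$, forcing $L$ below the HFD threshold of $1$. The only other piece requiring care is the separate treatment of $r=2$, where no valid $u$ exists in $\{2,\dots,r-1\}$ and the conclusion follows directly from the parametrization of order-$r$ classes.
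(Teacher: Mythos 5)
Your proof is correct and follows essentially the same route as the paper: both reduce to $\q\equiv\p^{(u)}$ for some $u$ with $1<u<r$ and then test a principal ideal of the form $\p^{(k)}\cap\q$ against Zaks' integrality criterion $L(\fa)\in\bbz$ from Remark~\ref{disc170805a}. The only difference is cosmetic: you take $k=r-u$ so that $L(\fa)=(r-u+1)/r<1$ yields an immediate contradiction, whereas the paper takes $k=n-u$ and extracts the contradiction from the resulting divisibility condition $r\mid(u-1)$.
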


\begin{proof}
Suppose that $\p$ and $\q$ are 
height-1 prime ideals  of $D$ such that $\p\not\equiv\q$ in $\Cl(D)$ and
$o(\p)=r=o(\q)$.
Since $\Cl(D)$ is cyclic of order $n$, the fact that $\p$ and $\q$ have the same order
implies that they generate the same subgroup of $\Cl(D)$, necessarily with order $r$. 
In particular, there is an integer $x\in\bbn$ such that $\q\equiv\p^{(x)}$ in $\Cl(D)$ and such that
$1<x<r$.

Consider the ideal $\fa=\p^{(n-x)}\cap\q$. Using the arithmetic in $\Cl(D)$, we have
$\fa=\p^{(n-x)}\cap\q\equiv\p^{(n-x+x)}\equiv D$, so $\fa$ is principal. 
We work with Notation~\ref{notn170805a}:
$$L(\fa)=\frac{n-x}{r}+\frac{1}{r}=\frac{n+1-x}{r}.
$$
Since $D$ is an HFD and $\fa$ is principal, Remark~\ref{disc170805a} says that $L(\fa)\in\bbz$.
Thus, the divisibility condition $r\mid |\Cl(D)|=n$ implies that
$r\mid(x-1)$.
Because we have $x>1$, hence $x-1>0$, it follows that $x-1\geq r$, so $x>r$.
But we have assumed that $x<r$, which provides the desired contradiction.
\end{proof}

The following lemma is motivated by~\cite[Lemma~4.7]{MR1858160}.

\begin{lem}\label{lem170906c}
Let  $D$ be a Krull domain such that $\Cl(D)$ is cyclic and there is a height-1 prime ideal $\p$ whose class in $\Cl(D)$ generates $\Cl(D)$.
If $S=S(D)$ is as in Notation~\ref{notn170906a},
then every $s\in S$ satisfies $s\mid n$.
\end{lem}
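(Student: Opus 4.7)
My plan is to exploit the principality criterion for divisorial ideals in a cyclic class group, together with Zaks' observation (recorded in Remark~\ref{disc170805a}) that in an HFD with torsion class group the function $L$ sends every principal divisorial ideal to a positive integer. So I am going to assume the HFD hypothesis is in force here (as in the preceding results of this section and in the motivating~\cite[Lemma~4.7]{MR1858160}); without it the statement is false, since Claborn's theorem gives a Dedekind domain with $\Cl(D)\cong\bbz_4$ having a non-principal height-1 prime representing $[3]$, and $3\nmid 4$.

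Given $s\in S$, fix a height-1 prime $\q$ with $\q\approx s$, and set $d=\gcd(s,n)$. The case $s=n$ is trivial since $n\mid n$, so I will assume $1\le s<n$. Under the fixed isomorphism $\phi\colon\bbz_n\xra{\cong}\Cl(D)$, the class of $\p$ has order $n$ and the class of $\q$ has order $n/d$. The key construction is the divisorial ideal
\[
\fa=\p^{(n-s)}\cap\q,
\]
whose class in $\Cl(D)$ is $\phi([n-s]+[s])=\phi([0])=0$; hence $\fa$ is principal. Using Notation~\ref{notn170805a}, a direct computation yields
\[
L(\fa)=\frac{n-s}{o(\p)}+\frac{1}{o(\q)}=\frac{n-s}{n}+\frac{d}{n}=\frac{n-s+d}{n}.
\]
Since $D$ is an HFD and $\fa$ is principal, Remark~\ref{disc170805a} forces $L(\fa)\in\bbn$, so $n\mid(n-s+d)$, i.e., $n\mid(s-d)$.

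The proof then closes with a divisibility squeeze: because $d=\gcd(s,n)\le s$ we have $s-d\ge 0$, and because $d\ge 1$ and $s<n$ we have $s-d<n$. Combined with $n\mid(s-d)$, this forces $s-d=0$, so $s=d=\gcd(s,n)$, and therefore $s\mid n$, as desired. The computational side is straightforward; the only real hazard is the HFD input through Remark~\ref{disc170805a}, and the reason the ``squeeze'' works is the normalization $1\le s\le n$ built into the definition of $S(D)$.
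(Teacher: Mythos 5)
Your proof is correct and follows essentially the same route as the paper's: form the principal ideal $\fa=\p^{(n-s)}\cap\q$, compute $L(\fa)$, invoke Remark~\ref{disc170805a} to force $L(\fa)\in\bbz$, and squeeze $s-d$ between $0$ and $n$ (the paper writes $\delta=n/o(\q)$ where you write $d=\gcd(s,n)$; these coincide). You are also right to flag that the HFD hypothesis, which the paper's own proof visibly uses, is missing from the statement as printed and must be supplied for the lemma to hold.
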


\begin{proof}
We work in the setting of Notation~\ref{notn170906a}.
Let $s\in S=S(D)$, and let $\q$ be a height-1 prime ideal of $D$ such that $s\approx\q$.
Set $\delta=n/o(\q)$ where $o(\q)$ is the order of the class of $\q$ in $\Cl(D)$.
Consider the ideal $\fa=\p^{(n-s)}\cap\q$.
By assumption, we have $\q\equiv\p^{(s)}$ in $\Cl(D)$, so $\fa\equiv\p^{(n)}\equiv R$.
That is, $\fa$ is principal.
Since $\fa$ is an HFD and a Krull domain with torsion divisor class group, 
Remark~\ref{disc170805a} implies that $L(\fa)\in\bbz$.
However, we have
$$L(\fa)=\frac{n-s}{n}+\frac{1}{o(\q)}
=1-\frac sn+\frac\delta n
=1+\frac{\delta-s}{n}$$
so we must have $n\mid(\delta-s)$.
Since $\delta$ and $s$ lie between 1 and $n$ (inclusive), uniqueness of remainders modulo $n$ implies that $s=\delta\mid n$, as desired.
\end{proof}

The next result recovers the case $\Cl(D)\cong\bbz_{p^n}$ of Theorem~\ref{thm170903b} because of~\cite[Theorem~8.4]{zaks}.

\begin{thm}\label{thm170906a}
Let  $D$ be a Krull domain such that $\Cl(D)$ is cyclic and there is a height-1 prime ideal $\p$ whose class in $\Cl(D)$ generates $\Cl(D)$.
Assume that the set $S$ from Notation~\ref{notn170906a} is totally ordered by divisibility.
If $D$ is an HFD, then $D$ is an IDPD.
\end{thm}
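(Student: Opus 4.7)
The plan is to mirror the proof of Theorem~\ref{thm170903b}, replacing the $p$-adic chain of prime orders with the divisibility chain on $S$, and to verify the IDPD property via condition~($\dagger$) of Proposition~\ref{prop190826a}. Fix a non-zero non-unit $z\in D$ that is a product of non-prime atoms, together with non-associate non-prime irreducible divisors $\pi,\tau$ of $z$. By Lemma~\ref{lem170906c}, every $s\in S$ divides $n$, so each height-1 prime $\q$ with $s\approx\q$ has order $o(\q)=n/s$ in $\Cl(D)$. Write $zD=\bigcap_{\q\in X}\q^{(a(\q))}$, $\pi D=\bigcap_{\q\in X}\q^{(b(\q))}$, and $\tau D=\bigcap_{\q\in X}\q^{(c(\q))}$, where $X$ is the finite union of supports and $b(\q),c(\q)\leq a(\q)$ (using $\pi,\tau\mid z$), and let $s(\q)\in S$ satisfy $s(\q)\approx\q$.

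As at the beginning of the proof of Theorem~\ref{thm170903b}, principality combined with the HFD hypothesis (via Remark~\ref{disc170805a}) translates the atom condition for $\pi$ and $\tau$ into $\sum_\q b(\q)s(\q)=n=\sum_\q c(\q)s(\q)$, while setting $r=\ell(z)\in\bbn$ gives $\sum_\q a(\q)s(\q)=rn$. As in the base-case reasoning of Lemma~\ref{lem170903a}, the non-associate assumption forces $r\geq 2$ (else $\pi\sim z\sim\tau$). Setting $M(\q)=\max(b(\q),c(\q))$, one has
\[\sum_\q M(\q)s(\q)\leq\sum_\q(b(\q)+c(\q))s(\q)=2n\leq rn=\sum_\q a(\q)s(\q).\]

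The crux is to produce $f\colon X\to\bbn_0$ with $M(\q)+f(\q)\leq a(\q)$ for all $\q\in X$ and $\sum_\q(M(\q)+f(\q))s(\q)=2n$. This is the analog of Lemma~\ref{lem170903c}, and it is exactly here that the hypothesis that $S$ is totally ordered by divisibility enters: a chain structure on $S$, together with $s\mid n$ for each $s\in S$, should allow a greedy filling argument that walks up the divisibility chain and allocates at each $\q$ an increment bounded by $a(\q)-M(\q)$ in order to hit $2n$ exactly. I expect this combinatorial step to be the main technical obstacle, to be recorded as an additional lemma in the appendix alongside Lemmas~\ref{lem170903b} and~\ref{lem170903c}.

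Given such $f$, set $\beta(\q)=(M(\q)-b(\q))+f(\q)\geq 0$ and $\gamma(\q)=(M(\q)-c(\q))+f(\q)\geq 0$. By construction $\sum_\q\beta(\q)s(\q)=n=\sum_\q\gamma(\q)s(\q)$, so the ideals $\bigcap_\q\q^{(\beta(\q))}$ and $\bigcap_\q\q^{(\gamma(\q))}$ are principal with $L$-value $1$, hence generated by atoms $\pi',\tau'\in D$ by Remarks~\ref{disc170803a} and~\ref{disc170805a}. The identities $\pi\pi'D=\bigcap_\q\q^{(M(\q)+f(\q))}=\tau\tau'D$ together with $M(\q)+f(\q)\leq a(\q)$ yield $\pi\pi'\sim\tau\tau'\mid z$. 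Finally, as at the end of the proof of Proposition~\ref{prop190826a}, neither $\pi'$ nor $\tau'$ can be prime---else $z$ would have a prime factor, contradicting its assumed form as a product of non-prime atoms---so~($\dagger$) is established.
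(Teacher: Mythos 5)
Your proposal follows the paper's proof essentially verbatim: the paper likewise reduces everything to the weighted-sum identities $\sum s\,b=n=\sum s\,c$ and $\sum s\,a=rn$ and then concludes ``as in the proof of Theorem~\ref{thm170903b}, with an application of Lemma~\ref{lem170906a} in place of Lemma~\ref{lem170903c}.'' The combinatorial lemma you defer to the appendix is exactly the paper's Lemma~\ref{lem170906a}, whose proof is the greedy induction you sketch; the one point worth making explicit there is that the running deficit $2n-\sum_{s\in S}s\sum_{y\in Y_s}M(y)$ is always divisible by the smallest $s_0\in S$ at which $M<a$ (using $s_0\mid n$, $s_0\mid s$ for all $s\in S$ with $s\geq s_0$, and $M=a$ on the classes below $s_0$), which is what guarantees the walk lands exactly on $2n$ rather than overshooting.
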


\begin{proof}
Assume that $D$ is an HFD.
Again, we work under the assumptions and notation from~\ref{notn170906a}.
From our assumptions on $D$, we have the following: for every finite set $Y$ of height-1 prime ideals, 
one can partition $Y$ into a union $\bigcup_{s\in S}Y_s$ of pairwise disjoint subsets $Y_s$ such that
(a) for all $\fs\in Y_s$ and all $\ft\in Y_t$ with $t\mid s$ (i.e., $t\leq s$) we have $\fs\equiv\ft^{(s/t)}$, and 
(b) for all $\fs\in Y_s$ the order of $\fs$ in $\Cl(D)$ is $o(\fs)=n/s$.
In particular, item~(a) here implies that $\fs=\p^{(s)}$ for all $\fs\in Y_s$.

Let $\fa\subsetneq R$ be a divisorial ideal of $D$.
Then $\fa$ has a decomposition as a finite intersection of symbolic powers $\fs^{(a(\fs))}$ of height-1 prime ideals with exponents $a(\fs)\geq 1$. 
Let $Y$ be the set of primes that occur in this decomposition. 
Partition the set $T$ as in the preceding paragraph, then we have
\begin{equation}\label{eq170906a}
\fa=\bigcap_{s\in S}\bigcap_{\fs\in Y_s}\fs^{(a(\fs))}.
\end{equation}
Condition~(a) from the preceding paragraph implies that
$$
\fa
=\bigcap_{s\in S}\bigcap_{\fs\in Y_s}\fs^{(a(\fs))}
=\p^{(\sum_{s\in S}s\sum_{\fs\in Y_s}a(\fs))}.
$$
In particular, since $o(\p)=n$ in $\Cl(D)$, the ideal $\fa$ is principal if and only if
$\sum_{s\in S}s\sum_{\fs\in Y_s}a(\fs)\in n\bbz$, that is, if and only if $L(\fa)\in\bbz$,
where $L$ is as in Notation~\ref{notn170805a}.
Furthermore, if $\fa$ is principal, then it is generated by an atom of $D$ if and only if $L(\fa)=1$,
that is, if and only if $\sum_{s\in S}s\sum_{\fs\in Y_s}a(\fs)= n$.

Let $\ell$ be as in Notation~\ref{notn170805a}.
Let $z\in D$ be a non-zero non-unit, and let $\pi,\tau$ be non-associate irreducible factors of $z$ in $D$. 
To show that $\pi$ and $\tau$ satisfy the defining property in~\ref{defn170725a},
set $r=\ell(z)$. 
As in the proof of Lemma~\ref{lem170903a}, note that~$r\geq 2$.

Decompose the ideal $\fa=zD$ as in~\eqref{eq170906a}
$$zD=\bigcap_{s\in S}\bigcap_{\fs\in Y_s}\fs^{(a(\fs))}
$$
for some $a(\fs)\in\bbn$.
Our above analysis shows that 
$\sum_{s\in S}s\sum_{\fs\in Y_s}a(\fs)= rn$ where $r=\ell(z)\in\bbn$.
The argument concludes as in the proof of Theorem~\ref{thm170903b},
with an application of Lemma~\ref{lem170906a} in place of Lemma~\ref{lem170903c}.
\end{proof}

One common feature of some of the preceding proofs is  the implication 
$L(\fa)\in\bbz\implies\fa\equiv D$.
One may be tempted to think that, given  a Krull domain $D$ with torsion divisor class group 
such that $L(\fa)\in\bbz\implies\fa\equiv D$, if $D$ is an HFD then it is an IDPD.
The proof of our next result shows this to be false in general.
Furthermore, this result provides the final piece of diagram~\eqref{diag170909a}.

\begin{thm}\label{thm170908a}
There exists a Dedekind domain $D$ such that $\Cl(D)\cong\bbz_6$ and such that $D$ is an HFD but not an IDPD.
\end{thm}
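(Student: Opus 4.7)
I plan to realize $D$ as a Dedekind domain with $\Cl(D)\cong\bbz_6$ whose non-principal height-$1$ primes lie precisely in the classes $1,2,3\in\bbz_6$; such a domain is available from the Gilmer--Heinzer--Smith realization result \cite[Theorem~5]{MR1434383} exploited in Example~\ref{ex170809a}. Fix non-principal primes $\p,\q,\fr$ of $D$ representing classes $1,2,3$ respectively.

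\textbf{HFD via Zaks' criterion.} By Remark~\ref{disc170803a}, the atoms of $D$ correspond to the principal ideals $\bigcap_i \fs_i^{(e_i)}$ (each $\fs_i$ a non-principal height-$1$ prime) that admit no proper principal sub-intersection. Encoding each divisorial ideal by its \emph{class-content} $(a,b,c)\in\bbn^3$ recording the total exponents in classes $1,2,3$, principality becomes $a+2b+3c\equiv 0\pmod 6$, and a direct enumeration yields exactly seven minimal class-contents of atoms, namely $(6,0,0)$, $(0,3,0)$, $(0,0,2)$, $(1,1,1)$, $(3,0,1)$, $(2,2,0)$, $(4,1,0)$. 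A routine computation gives Zaks' $L$-value equal to $1$ in each case, so $D$ is an HFD by Remark~\ref{disc170805a}.

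\textbf{IDPD failure.} Let $z\in D$ generate $zD=\p^{(6)}\cap\q^{(3)}\cap\fr^{(2)}$, which is principal since $6+2\cdot 3+3\cdot 2\equiv 0\pmod 6$. Let $p,q\in D$ be atoms with $pD=\p^{(6)}$ and $qD=\p\cap\q\cap\fr$ (of class-contents $(6,0,0)$ and $(1,1,1)$ respectively); both divide $z$ and are manifestly non-associate. I claim no atoms $p',q'\in D$ satisfy $pp'\sim qq'\mid z$. The condition $pp'\mid z$ forces the class-content of $p'$ to be componentwise at most $(0,3,2)$, so among the seven atom types only those with class-contents $(0,3,0)$ and $(0,0,2)$ are feasible; thus $p'D\in\{\q^{(3)},\fr^{(2)}\}$ and the class-$1$ exponent of $pp'$ equals $6$. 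On the other hand, $q$ has class-$1$ exponent $1$ and every atom has class-$1$ exponent in the set $\{0,1,2,3,4,6\}$, so the class-$1$ exponent of $qq'$ lies in $\{1,2,3,4,5,7\}$---never $6$. Hence $pp'\not\sim qq'$, and $D$ is not an IDPD. The main difficulty is spotting this arithmetic obstruction (no atom has class-$1$ exponent $5$) and choosing the class group $\bbz_6$ and element $z$ accordingly; once this is identified, the HFD verification and the content bookkeeping for the non-existence of $p',q'$ are short and mechanical.
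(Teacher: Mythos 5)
Your proposal is correct, and it reaches the conclusion by a noticeably different route than the paper's proof, even though both start from the same Gilmer--Heinzer--Smith Dedekind domain with $\Cl(D)\cong\bbz_6$ and non-principal primes exactly in the classes $1,2,3$. The paper cites Chapman--Smith \cite[Theorem~3.8]{MR1074505} for the HFD property, whereas you verify it directly by enumerating the seven minimal atom class-contents and checking $L=1$ for each; your list is complete and the additivity of $L$ then gives HFD (you should say one more word to the effect that $L(xD)=n$ for any product of $n$ atoms, so $L=1$ forces atomhood, which is the ``only if'' half of Zaks' criterion). For the IDPD failure, the paper needs \emph{two} distinct primes in class $1$ and two in class $3$ (hence the extra appeal to \cite[Theorem~8]{MR1434383}), takes $zD=\p_1^{(3)}\cap\p_2^{(2)}\cap\q^{(2)}\cap\fr_1^{(2)}\cap\fr_2$ with atoms $\pi D=\p_1\cap\p_2^{(2)}\cap\fr_1$ and $\tau D=\p_1^{(3)}\cap\fr_2$, and derives a contradiction from a system of exponent inequalities culminating in the unsolvable equation $\tfrac e3+\tfrac x2=\tfrac 16$. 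Your witness $zD=\p^{(6)}\cap\q^{(3)}\cap\fr^{(2)}$ with $pD=\p^{(6)}$ and $qD=\p\cap\q\cap\fr$ uses only one prime per class, and the obstruction is a single clean observation: divisibility forces the class-$1$ content of $pp'$ to be exactly $6$, while $1+\{0,1,2,3,4,6\}$ never hits $6$ because no atom has class-$1$ content $5$. This buys a shorter endgame, a weaker existence input (one prime per class suffices), and a self-contained HFD verification; the paper's version, in exchange, leans on known results and avoids the full enumeration of atom types. Both arguments are sound.
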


\begin{proof}
The result~\cite[Theorem~5]{MR1434383} provides a
Dedekind domain $D$ such that $\Cl(D)\cong\bbz_6$ and such that 
$S=S(D)=\{1,2,3\}$ in the notation of~\ref{notn170906a}.
Moreover, from~\cite[Theorem~8]{MR1434383}, we may assume further that there are infinitely many
height-1  prime (i.e., maximal) ideals 
in each of the classes in $\bbz_6$ of 1, 2, and 3.

According to Chapman and Smith~\cite[Theorem~3.8]{MR1074505}, the ring $D$ is an HFD.
Thus, we need only show that $D$ is not an IDPD.
To this end, suppose by way of contradiction that $D$ were an IDPD.

Let $\p_1$, $\p_2$, $\q$, $\fr_1$, and $\fr_2$
be height-1 prime ideals of $D$ such that $1\approx \p_i$ 
and $2\approx \q$ and $3\approx \fr_i$ for $i=1,2$
in the notation of~\ref{notn170906a}.
In particular, we have $\p_2\equiv\p_1$ and $\q\equiv\p_1^{(2)}$ and $\fr_i\equiv\p_1^{(3)}$ for $i=1,2$.
Consider the ideal 
$$\fa=\p_1^{(3)}\cap\p_2^{(2)}\cap\q^{(2)}\cap\fr_1^{(2)}\cap\fr_2.$$
In $\Cl(D)\cong\bbz_6$, our choice of $S$ implies that
$$\fa\equiv\p_1^{(5+2\cdot 2+3\cdot 3)}=\p_1^{(18)}\equiv D.$$
That is, $\fa$ is principal, say $\fa=zD$. 

Next, consider the ideal
$$\fb=\p_1\cap\p_2^{(2)}\cap\fr_1\equiv\p_1^{(6)}\equiv D.$$
It follows that $\fb$ is also principal, say, $\fb=\pi D$.
Each $\p_i$ has order $o(\p_i)=6$ in $\Cl(D)$, and similarly $o(\q)=3$ and $o(\fr_i)=2$.
So, in the notation of~\ref{notn170805a}, we have $\ell(\pi)=1$, and Remark~\ref{disc170805a}
implies that $\pi$ is an atom in $D$.
Furthermore, comparing the exponents on the primes defining $\fa$ and $\fb$, 
it is straightforward to show that $zD=\fa\subseteq\fb=\pi D$, so we have $\pi\mid z$.

Similarly, the ideal
$$\fc=\p_1^{(3)}\cap\fr_2$$
is principal, generated by an atom $\tau$ such that $\tau\mid z$.
The fact that $\fb$ and $\fc$ use different exponents on their primes (or since they don't use all the same primes)
implies that $\fb\neq\fc$, so $\pi\not\sim\tau$.

Since $D$ is an IDPD, it follows that there are atoms $\pi',\tau'\in D$ such that $\pi\pi'\sim\tau\tau'\mid z$.
In particular, we have $\fa=zD\subseteq\pi'D$ so the defining expression of $\fa$ implies that 
we can write
$$\pi'D
=\p_1^{(a_1)}\cap\p_2^{(a_2)}\cap\q^{(b)}\cap\fr_1^{(c_1)}\cap\fr_2^{(c_2)}$$
where the exponents are in $\bbn$.
Moreover the condition $\pi\pi'\mid z$ implies that
$$\p_1^{(3)}\cap\p_2^{(2)}\cap\q^{(2)}\cap\fr_1^{(2)}\cap\fr_2=\fa\subseteq\pi\pi'D
=\p_1^{(1+a_1)}\cap\p_2^{(2+a_2)}\cap\q^{(b)}\cap\fr_1^{(1+c_1)}\cap\fr_2^{(c_2)}.$$
Comparing exponents, we conclude that
\begin{align*}
1+a_1&\leq 3
&
2+a_2&\leq 2
&
b&\leq 2
&
1+c_1&\leq 2
&
c_2&\leq 1
\intertext{so we must have}
a_1&\leq 2
&
a_2&\leq 0
&
b&\leq 2
&
c_1&\leq 1
&
c_2&\leq 1
\end{align*}
Similarly, we have
$$\tau'D
=\p_1^{(d_1)}\cap\p_2^{(d_2)}\cap\q^{(e)}\cap\fr_1^{(f_1)}\cap\fr_2^{(f_2)}$$
and
$$\p_1^{(3)}\cap\p_2^{(2)}\cap\q^{(2)}\cap\fr_1^{(2)}\cap\fr_2=\fa\subseteq\tau\tau'D
=\p_1^{(3+d_1)}\cap\p_2^{(d_2)}\cap\q^{(e)}\cap\fr_1^{(f_1)}\cap\fr_2^{(1+f_2)}.$$
Comparing exponents, we conclude that
\begin{align*}
3+d_1&\leq 3
&
d_2&\leq 2
&
e&\leq 2
&
f_1&\leq 2
&
1+f_2&\leq 1
\intertext{so we must have}
d_1&\leq 0
&
d_2&\leq 2
&
e&\leq 2
&
f_1&\leq 2
&
f_2&\leq 0.
\end{align*}
Furthermore, the condition $\pi\pi'\sim\tau\tau'$ implies that $\pi\pi'D=\tau\tau'D$.
Comparing the exponents for these ideals in the displays above, we conclude that
\begin{align*}
1+a_1&=3+d_1=3
&
d_2&=2+a_2=2
&
b&=e\geq 0
\\
f_1&=1+c_1\geq 1
&
c_2&=1+f_2=1.
\end{align*}
From this, we conclude that 
$$\pi\pi'D=\tau\tau'D
=\p_1^{(3)}\cap\p_2^{(2)}\cap\q^{(e)}\cap\fr_1^{(f_1)}\cap\fr_2^{(1)}$$
where $0\leq e\leq 2$ and $1\leq f_1$. Write $f_1=1+x$ where $x\geq 0$.

Since $D$ is an HFD, we must have
$$
2
=\ell(\pi\pi')
=\frac 56+\frac e3+\frac{f_1+1}2
=\frac 56+\frac e3+\frac{x+2}2
=\frac {11}6+\frac e3+\frac x2.$$
It follows that $e$ and $x$ are non-negative integers such that $\frac e3+\frac x2=\frac 16$, which is impossible.
This contradiction completes the proof.
\end{proof}

\begin{disc}\label{disc170908a}
It is worth noting that the ring $D$ from Theorem~\ref{thm170908a} has the smallest divisor class group among all
Krull domains that are HFDs but not IDPDs.
Indeed, if $R$ is a Krull domain with $|\Cl(R)|<6$ that is also an HFD, then $\Cl(R)$ is isomorphic to one of the following:
$0$, $\bbz_2$, $\bbz_3$, $\bbz_4$, $\bbz_2^2$, or $\bbz_5$.
In the cyclic cases, Theorem~\ref{thm170903b} shows that $R$ is an IDPD.
In the case of $\bbz_2^2$, we get the same conclusion from Corollary~\ref{thm170802a}.

Furthermore, if $D'$ is another Krull domain with $\Cl(D')\cong \bbz_6$ such that $S'=S(D')\neq\{1,2,3\}$, then $D'$ is an IDPD.
(In other words, the $S$ of Theorem~\ref{thm170908a} is the only way to find a 
Krull domain $D$ with $\Cl(D)\cong \bbz_6$ that is not an IDPD.)
We must be a bit careful here since, technically, we have only defined $S'$ when $D'$
has a height-1 prime ideal representing an element of order 6 in $\Cl(D')$.
In this case, let $\phi\colon\Cl(D)\to\bbz_6$ be some isomorphism, and set 
$$S'=\{j\in\{1,2,3,4,5\}\mid\text{$\phi^{-1}(j)$ is the class of a height-1 prime}\}.
$$
We now analyze $D'$ by cases.

Case 1: $1\in S'$. In this case, Lemma~\ref{lem170906c} implies that $S'\subseteq\{1,2,3\}$.
We have assumed that $S'\neq\{1,2,3\}$, so we have three subcases.
If $S'=\{1\}$, then Lemma~\ref{lem170903a} shows that $D'$ is an IDPD.
On the other hand, if
$S'=\{1,2\}$
or $S'=\{1,3\}$, then $D'$ is an IDPD by Theorem~\ref{thm170906a}.

Case 2: $1\notin S'$. In this case, by the specific construction of $S'$ in Notation~\ref{notn170906a},
the set $S'$ contains no integer representing an element of order $6$ in $\bbz_6$. In other words, in this case, we
must have $S'\subseteq\{2,3,4\}$. 
Since the classes of prime ideals in $\Cl(D')$ generate $\Cl(D')$, it follows that we must have $2,3\in S'$ or $3,4\in S'$.
On the other hand, Lemma~\ref{prop170909a} shows that $S'$ cannot contain both 2 and 4.
Thus, we must have $S'=\{2,3\}$ or $S'=\{3,4\}$. In either case one obtains an isomorphism $\Cl(D')\cong\bbz_2\times\bbz_3$
satisfying the hypotheses of Lemma~\ref{lem170903a}, so we conclude that $D'$ is an IDPD, as desired.
\end{disc}

\begin{disc}\label{disc170909a}
One can construct other Dedekind domains with the properties of the 
ring $D$ from Theorem~\ref{thm170908a}. 
For example, let $E$ be a Dedekind domain with $\Cl(E)\cong\bbz_4\times\bbz_2$
such that the height-1 primes correspond to the elements $(1,0)$, $(0,1)$, and $(3,1)$.
Let $\p$, $\q$, and $\fr$ be such primes, respectively.
Then the ideals 
$zD=\p^{(4)}\cap\q^{(2)}\cap\fr^{(4)}$
and
$\pi D=\p\cap\q\cap\fr$
and
$\tau D=\fr^{(4)}$
show that $E$ cannot be an IDPD, as in the proof of 
Theorem~\ref{thm170908a}. 
\end{disc}

The form of the divisor class groups in Theorem~\ref{thm170908a} and Remark~\ref{disc170909a} lead us to the following.

\begin{question}\label{q170911a}
For any $m\in\bbn$ with $m\geq 3$, does there exist a Dedekind domain $D$ that is an HFD but not an IDPD
and has $\Cl(D)\cong\bbz_m\times\bbz_2$?
More generally, for any
$m,n\in\bbn$ with $m\geq n\geq 2$, does there exist a Dedekind domain $D$ that is an HFD but not an IDPD
and has $\Cl(D)\cong\bbz_m\times\bbz_n$?
\end{question}

\appendix
\section{Ancillary Lemmas}\label{sec170903a}

This appendix contains three technical lemmas for use in the proofs of Section~\ref{sec170906a}.
We would not be surprised to learn that these lemmas are known and that more general statements are known, 
but we include them for the sake of completeness.

\begin{lem}\label{lem170903b}
Let $T,n\in \bbn$ be such that $n\geq 2$, and let
$M_1,\ldots,M_T,c_1,\ldots,c_T\in\bbn_0$ be such that
$M_t\leq c_t$ for $t=1,\ldots,T$ and such that $\sum_{t=1}^TM_t\leq 2n\leq\sum_{t=1}^tc_t$.
Then there are integers $y_1,\ldots,y_T\in\bbn_0$ such that
$y_t\leq c_t-M_t$ for all $t$ and such that $\sum_{t=1}^TM_t+\sum_{t=1}^Ty_t=2n$.
\end{lem}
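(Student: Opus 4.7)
My approach to the lemma is a direct greedy allocation. Setting $D := 2n - \sum_{t=1}^T M_t$, the hypothesis $\sum_t M_t \leq 2n$ gives $D \geq 0$, while $\sum_t c_t \geq 2n$ together with $M_t \leq c_t$ gives $\sum_t (c_t - M_t) = \sum_t c_t - \sum_t M_t \geq D$. So the task reduces to distributing $D$ units among $T$ bins with capacities $c_t - M_t$ whose sum covers $D$, and the desired $y_t$ are exactly such a distribution.

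Concretely, I would prove by induction on $T$ the slightly more general statement: for any integer $K$ with $\sum_t M_t \leq K \leq \sum_t c_t$, there exist $y_1,\ldots,y_T \in \bbn_0$ with $y_t \leq c_t - M_t$ and $\sum_t M_t + \sum_t y_t = K$. Taking $K = 2n$ recovers the lemma (the hypothesis $n \geq 2$ plays no role in the argument). The base case $T = 1$ amounts to setting $y_1 := K - M_1$, which lies in $\bbn_0$ since $K \geq M_1$ and satisfies $y_1 \leq c_1 - M_1$ since $K \leq c_1$.

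For the inductive step with $T \geq 2$, I would set $y_T := \min\bigl(c_T - M_T,\, K - \sum_{t=1}^T M_t\bigr)$ and then apply the induction hypothesis to the remaining $T-1$ indices with $K' := K - M_T - y_T$. There are two sub-cases: (i) if $y_T = K - \sum_t M_t$, then $K' = \sum_{t<T} M_t$ and one simply takes $y_t = 0$ for $t < T$; (ii) if $y_T = c_T - M_T$ (which forces $c_T - M_T \leq K - \sum_t M_t$), then $K' = K - c_T$, and the bounds $\sum_{t<T} M_t \leq K' \leq \sum_{t<T} c_t$ follow from the inequality $c_T - M_T \leq K - \sum_t M_t$ and from $K \leq \sum_t c_t$, respectively, so the induction hypothesis applies to produce $y_1, \ldots, y_{T-1}$.

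I do not expect a substantive obstacle: the lemma is essentially a pigeonhole statement for nonnegative integers, and the only care required is to confirm that after peeling off $y_T$ greedily, the residual bounds remain consistent with the inductive hypothesis, which the case split above handles directly.
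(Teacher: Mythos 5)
Your proof is correct, but it takes a different route from the paper's. The paper argues by induction on the deficit $m=2n-\sum_{t=1}^TM_t$: at each step it locates an index with $M_t<c_t$ (which must exist when $m\geq 1$, since $\sum_t c_t\geq 2n$), increments that single $M_t$ by one, and applies the inductive hypothesis to the resulting smaller deficit; the $y_t$ are thus built up one unit at a time. You instead induct on the number of indices $T$ and allocate greedily, assigning $y_T=\min\bigl(c_T-M_T,\,K-\sum_tM_t\bigr)$ in one step and recursing on the remaining $T-1$ bins with the adjusted target $K'$; your case split correctly verifies that the residual target stays between $\sum_{t<T}M_t$ and $\sum_{t<T}c_t$, which is the only point where care is needed. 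Your formulation with an arbitrary target $K$ also makes explicit what is implicit in the paper's argument, namely that neither the hypothesis $n\geq 2$ nor the specific value $2n$ is used; both proofs are elementary pigeonhole-type arguments, with the paper's unit-increment version requiring slightly less bookkeeping per step and yours terminating in $T$ rather than $m$ steps.
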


\begin{proof}
We argue by induction on $m=2n-\sum_{t=1}^TM_t\geq 0$.
The base case $m=0$ is trivial with $y_t=0$ for all $t$. 

In the induction step with $m\geq 1$, note that we must have $M_t< c_t$ for some $t$;
otherwise, we have $M_t=c_t$ for all $t$ so $m=0$.
Re-order the $M_t$'s and $c_t$'s to assume that $M_T<c_T$.
Let $M'_t=M_t$ for all $t<T$, and set $M'_T=M_T+1$.
Then we have $M'_t\leq c_t$ for all $t$ and
$m'=2n-\sum_{t=1}^TM'_t=(2n-\sum_{t=1}^TM_t)-1\geq 0$.
Thus, by our (unstated) induction hypothesis, there are integers 
$y'_1,\ldots,y'_T\in\bbn_0$ such that
$y'_t\leq c_t-M'_t$ for all $t$ and such that $\sum_{t=1}^TM'_t+\sum_{t=1}^Ty'_t=2n$.
Set $y_t=y'_t\geq 0$ for all $t<T$ and set $y_T=y'_T+1\geq 0$.

For $t<T$, note that $y_t=y'_t \leq c_t-M'_t=c_t-M_t$.
Also, we have $y_T=y'_T+1\leq c_T-M'_T+1=c_T-(M_T+1)+1=c_T-M_T$.
Similarly, we have
$\sum_{t=1}^TM_t+\sum_{t=1}^Ty_t=\sum_{t=1}^TM'_t+\sum_{t=1}^Ty'_t=2n$,
so the numbers $y_t$ satisfy the desired condition.
\end{proof}

\begin{lem}\label{lem170906a}
Let $r,n\in\bbn_0$, and let $S$ be a non-empty set of positive divisors of $n$. 
Assume that $S$ is totally ordered by divisibility. 
Let $Y$ be a non-empty finite set 
partitioned into a union $Y=\bigcup_{s\in S}Y_s$ of pairwise disjoint subsets.
Let functions $M,a\colon Y\to\bbn_0$ be given such that $M(y)\leq a(y)$ for all $y\in Y$.
Assume that $\sum_{s\in S}s\sum_{y\in Y_s}M(y)\leq 2n\leq rn=\sum_{s\in S}s\sum_{y\in Y_s}a(y)$.
Then there is a function $f\colon Y\to\bbn_0$ such that $M(y)+f(y)\leq a(y)$ for all $y\in Y$
and such that $\sum_{s\in S}s\sum_{y\in Y_s}(M(y)+f(y))= 2n$.
\end{lem}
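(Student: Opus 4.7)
The plan is to mimic the inductive strategy of Lemma~\ref{lem170903b}, inducting on
$$\Delta := 2n - \sum_{s\in S} s\sum_{y\in Y_s} M(y)\geq 0.$$
The base case $\Delta=0$ is immediate with $f\equiv 0$. For the inductive step, the strategy is to locate some $y^\ast\in Y_{s^{\ast\ast}}$ with $M(y^\ast)<a(y^\ast)$ and $s^{\ast\ast}\leq\Delta$. Once such $y^\ast$ is in hand, replace $M(y^\ast)$ by $M(y^\ast)+1$; this produces an instance satisfying the same hypotheses (the same $a$, same $r$, but smaller $\Delta-s^{\ast\ast}$), so the inductive hypothesis supplies an $f'$ with $f'\leq a-M'$ and $\sum s\sum(M'+f')=2n$. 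Setting $f(y^\ast)=f'(y^\ast)+1$ and $f(y)=f'(y)$ otherwise completes the step.

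The only real obstacle is establishing that $s^{\ast\ast}$ can be taken to be at most $\Delta$, and this is where the two structural hypotheses enter decisively. Let $s^{\ast\ast}$ be the smallest $s\in S$ such that some $y\in Y_s$ has $M(y)<a(y)$; this exists because the total slack $\sum_s s\sum_y(a(y)-M(y))=rn-\sum_s s\sum_y M(y)\geq\Delta>0$. I would argue $s^{\ast\ast}\leq\Delta$ by contradiction: suppose $\Delta<s^{\ast\ast}$. By minimality of $s^{\ast\ast}$, we have $M=a$ on every $Y_s$ with $s<s^{\ast\ast}$, so
$$A:=\sum_{s<s^{\ast\ast}} s\sum_{y\in Y_s} a(y)=\sum_{s<s^{\ast\ast}} s\sum_{y\in Y_s} M(y).$$
Because $S$ is totally ordered by divisibility, $s^{\ast\ast}$ divides every $s\in S$ with $s\geq s^{\ast\ast}$, hence $s^{\ast\ast}$ divides both $\sum_{s\geq s^{\ast\ast}} s\sum_y M(y)$ and $\sum_{s\geq s^{\ast\ast}} s\sum_y a(y)$. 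Since $s^{\ast\ast}\in S$ is a divisor of $n$, it also divides $2n$ and $rn$. The identity $rn=A+\sum_{s\geq s^{\ast\ast}} s\sum_y a(y)$ then forces $s^{\ast\ast}\mid A$, and the identity $\Delta=2n-A-\sum_{s\geq s^{\ast\ast}} s\sum_y M(y)$ forces $s^{\ast\ast}\mid\Delta$. Together with $0<\Delta<s^{\ast\ast}$, this is a contradiction.

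It is worth flagging that both structural hypotheses are used exactly once and are essential to this divisibility argument: the chain condition on $S$ is what guarantees $s^{\ast\ast}\mid s$ for all $s\geq s^{\ast\ast}$, and the \emph{equality} $rn=\sum_s s\sum_y a(y)$ (not merely an inequality) is what produces $s^{\ast\ast}\mid A$. The remainder of the argument is routine bookkeeping: verifying $M'\leq a$, confirming $\sum s\sum M'\leq 2n$ (which follows from $s^{\ast\ast}\leq\Delta$), and checking that the lifted $f$ satisfies $f\leq a-M$ and $\sum s\sum(M+f)=2n$.
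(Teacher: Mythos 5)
Your proposal is correct and follows essentially the same route as the paper's proof: induction on $\Delta=2n-\sum_{s}s\sum_{y}M(y)$, with the key point being that the minimal $s^{\ast\ast}$ admitting slack satisfies $s^{\ast\ast}\mid\Delta$ (via the total ordering of $S$ by divisibility, $s^{\ast\ast}\mid n$, and the equality $rn=\sum_s s\sum_y a(y)$), hence $s^{\ast\ast}\leq\Delta$. The paper states this as two explicit claims rather than a single contradiction argument, but the content is identical.
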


\begin{proof}
We argue by induction on
$m=2n-\sum_{s\in S}s\sum_{y\in Y_s}M(y)$,
as in the proof of Lemma~\ref{lem170903b}.
The base case $n=0$ is trivial with $f(y)=0$ for all $y\in Y$.

For the induction step, assume that $m\geq 1$ and that the result holds for all functions $M'$
with $2n-\sum_{s\in S}s\sum_{y\in Y_s}M'(y)<m$.
The condition $m\geq 1$ implies that 
$\sum_{s\in S}s\sum_{y\in Y_s}M(y)<\sum_{s\in S}s\sum_{y\in Y_s}a(y)$
so there is some $s\in S$ and some $y\in Y_s$ such that $M(y)<a(y)$. 
Set 
$$s_0=\min\{s\in S\mid\text{there is a $y\in Y_s$ such that $M(y)<a(y)$}\}$$
and fix an element $y_0\in Y_{s_0}$ such that $M(y_0)<a(y_0)$.
Define $M'\colon Y\to\bbn_0$ by the formula
$$M'(y)=
\begin{cases}
M(y)&\text{if $y\neq y_0$} \\
M(y)+1&\text{if $y= y_0$}
\end{cases}
$$ 
and note that $M'(y)\leq a(y)$ for all $y\in Y$, by construction.

Claim 1:
We have $r_0\mid m$. 
We work modulo $s_0$:
\begin{align*}
m
&\stackrel{(1)}=2n-\sum_{s\in S}s\sum_{y\in Y_s}M(y)
\\
&\stackrel{(2)}=2n-\sum_{\substack{s\in S\\ s\geq s_0}}s\sum_{y\in Y_s}M(y)-\sum_{\substack{s\in S\\ s< s_0}}s\sum_{y\in Y_s}M(y)
\\
&\stackrel{(3)}\equiv -\sum_{\substack{s\in S\\ s< s_0}}s\sum_{y\in Y_s}M(y)\pmod{s_0}
\\
&\stackrel{(4)}\equiv -\sum_{\substack{s\in S\\ s< s_0}}s\sum_{y\in Y_s}a(y)
\\
&\stackrel{(5)}=-\sum_{\substack{s\in S\\ s\geq s_0}}s\sum_{y\in Y_s}a(y)-\sum_{\substack{s\in S\\ s< s_0}}s\sum_{y\in Y_s}a(y)\pmod{s_0}
\\
&\stackrel{(6)}=-\sum_{s\in S}s\sum_{y\in Y_s}a(y)
\\
&\stackrel{(7)}\equiv 0\pmod{s_0}
\end{align*}
Step~(1) is by definition of $m$.
Steps~(2) and~(6) are simple grouping.
Steps~(3) and~(5) are from the divisibility relations $s_0\mid n$ and $s_0\mid s$ for all $s\in S$ with $s\geq s_0$.
Step~(4) is from the definition of $s_0$. 
Step~(7) is from the assumption $rn=\sum_{s\in S}s\sum_{y\in Y_s}a(y)$.
This establishes  Claim~1.

Claim 2: $m\geq s_0$.
By way of contradiction, suppose that $m<s_0$.
By assumption, we have $0< m$.
However, Claim~1 shows that $s_0\mid m$, which is impossible since $0< m< s_0$.
This establishes  Claim~2.

To complete the proof, set 
$m'=2n-\sum_{s\in S}s\sum_{y\in Y_s}M'(y)$.
As we have already noted, we have
$M'(y)\leq a(y)$ for all $y\in Y$.
Now, we compute.
\begin{align*}
m'
&\stackrel{(8)}=2n-\sum_{s\in S}s\sum_{y\in Y_s}M'(y)
\\
&\stackrel{(9)}=2n-\sum_{s\in S}s\sum_{y\in Y_s}M(y)-s_0
\\
&\stackrel{(10)}=m-s_0
\\
&\stackrel{(11)}\geq 0
\end{align*}
Step~(8) is by definition of $m'$.
Step~(9) is by definition of $M'$.
Step~(10) is by definition of $m$.
And 
Step~(11) is from Claim~2.

It follows that the function $M'$ satisfies our induction hypothesis.
Thus, there is a function $f'\colon Y\to\bbn_0$ such that $M'(y)+f'(y)\leq a(y)$ for all $y\in Y$
and such that $\sum_{s\in S}s\sum_{y\in Y_s}(M'(y)+f'(y))= 2n$.
Define $f\colon Y\to\bbn_0$ by the formula
$$f(y)=
\begin{cases}
f'(y)&\text{if $y\neq y_0$} \\
f'(y)+1&\text{if $y= y_0$.}
\end{cases}
$$ 
By construction, we have $M(y)+f(y)=M'(y)+f'(y)$ for all $y\in Y$.
We conclude that $f$ satisfies the desired conditions.
\end{proof}

\begin{lem}\label{lem170903c}
Let $I,p,r\in\bbn_0$ be such that $p,r\geq 2$. 
Let $X$ be a non-empty finite set 
partitioned into a union of pairwise disjoint subsets $X_0,\ldots,X_I$.
Let functions $M,a\colon X\to\bbn_0$ be given such that $M(x)\leq a(x)$ for all $x\in X$.
Assume that $\sum_{i=0}^Ip^{I-i}\sum_{x\in X_i}M(x)\leq 2p^I\leq rp^I=\sum_{i=0}^Ip^{I-i}\sum_{x\in X_i}a(x)$.
Then there is a function $f\colon X\to\bbn_0$ such that $M(x)+f(x)\leq a(x)$ for all $x\in X$
and such that $\sum_{i=0}^Ip^{I-i}\sum_{x\in X_i}(M(x)+f(x))= 2p^I$.
\end{lem}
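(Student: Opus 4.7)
The plan is to deduce Lemma~\ref{lem170903c} directly from the preceding Lemma~\ref{lem170906a} by a straightforward reindexing. Set $n:=p^I$ and $S:=\{p^j:0\leq j\leq I\}=\{1,p,p^2,\ldots,p^I\}$. Since $I\geq 0$, the set $S$ is non-empty, every element of $S$ is a positive divisor of $n$, and $S$ is totally ordered by divisibility (two powers of $p$ are comparable under divisibility in the same order as their exponents). Hence $n$ and $S$ satisfy the first hypotheses of Lemma~\ref{lem170906a}.

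Next, I would reindex the partition of $X$ by $S$: set $Y:=X$ and $Y_{p^{I-i}}:=X_i$ for $0\leq i\leq I$. Then $Y=\bigcup_{s\in S}Y_s$ is a partition into pairwise disjoint subsets, and the functions $M,a\colon X\to\bbn_0$ become functions on $Y$. Under this reindexing the weighted sums transform as $\sum_{s\in S}s\sum_{y\in Y_s}M(y)=\sum_{i=0}^Ip^{I-i}\sum_{x\in X_i}M(x)$, and analogously for $a$. Consequently the hypothesis $\sum_{i=0}^Ip^{I-i}\sum_{x\in X_i}M(x)\leq 2p^I\leq rp^I=\sum_{i=0}^Ip^{I-i}\sum_{x\in X_i}a(x)$ translates verbatim into the inequality $\sum_{s\in S}s\sum_{y\in Y_s}M(y)\leq 2n\leq rn=\sum_{s\in S}s\sum_{y\in Y_s}a(y)$ required by Lemma~\ref{lem170906a}.

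Applying Lemma~\ref{lem170906a} produces a function $f\colon Y\to\bbn_0$ with $M(y)+f(y)\leq a(y)$ for all $y\in Y$ and with $\sum_{s\in S}s\sum_{y\in Y_s}(M(y)+f(y))=2n$. Translating back via $Y=X$ and $Y_{p^{I-i}}=X_i$ then yields precisely the function $f\colon X\to\bbn_0$ required by the statement. There is no substantive obstacle here: the entire argument is bookkeeping, and the crux is simply the observation that the weights $p^{I-i}$ for $0\leq i\leq I$ form a chain of divisors of $p^I$ under divisibility, which is exactly the setting of Lemma~\ref{lem170906a}. Alternatively, one could repeat the induction on $m=2p^I-\sum_{i=0}^Ip^{I-i}\sum_{x\in X_i}M(x)$ used in the proof of Lemma~\ref{lem170906a}, using that $p^{I-i}\mid p^I$ and that the $p$-adic divisibility relations among the weights $p^{I-i}$ mirror those used in Claim~1 of that proof; but the reduction above avoids any duplicated argument.
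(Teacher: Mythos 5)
Your proof is correct and is essentially identical to the paper's own argument, which likewise deduces the lemma from Lemma~\ref{lem170906a} by setting $Y=X$, $n=p^I$, $S=\{p^i\mid i=0,\ldots,I\}$, and $Y_{p^{I-i}}=X_i$. Your write-up simply makes the bookkeeping of that reduction explicit.
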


\begin{proof}
Use  $Y=X$ and $n=p^I$ in Lemma~\ref{lem170906a} with
$S=\{p^i\mid i=0,\ldots,I\}$ and $X_i=Y_{p^{I-i}}$.
\end{proof}

\section*{Acknowledgments}
I am grateful to Scott Chapman, Jim Coykendall, and Tiberiu Dumitrescu for useful suggestions about this work.

%\bibliography{../+new}
\providecommand{\bysame}{\leavevmode\hbox to3em{\hrulefill}\thinspace}
\providecommand{\MR}{\relax\ifhmode\unskip\space\fi MR }
% \MRhref is called by the amsart/book/proc definition of \MR.
\providecommand{\MRhref}[2]{%
  \href{http://www.ams.org/mathscinet-getitem?mr=#1}{#2}
}
\providecommand{\href}[2]{#2}

\end{document}